\theoremstyle{plain}
\newtheorem{theorem}{Theorem}[section]
\newtheorem{proposition}[theorem]{Proposition}
\newtheorem{corollary}[theorem]{Corollary}
\newtheorem{lemma}[theorem]{Lemma}
\newtheorem{conjecture}[theorem]{Conjecture}
\newtheorem*{problem}{Problem}
\numberwithin{equation}{section}
\theoremstyle{definition}
\theoremstyle{remark}
\newtheorem*{remarks}{Remarks}
\DeclareMathOperator{\Var}{Var}
\DeclareMathOperator{\Span}{span}
\DeclareMathOperator{\conv}{conv}
\DeclareMathOperator{\ext}{ext}
\def \< {\langle}
\def \> {\rangle}
\def \R {\mathbb{R}}
\def \E {\mathbb{E}}
\def \P {\mathbb{P}}
\def \one {{\bf 1}}
\def \NN {\mathcal{N}}
\def \EE {\mathcal{E}}
\def \a {\alpha}
\def \b {\beta}
\def \e {\varepsilon}
\def \d {\delta}
\begin{document}

\title[Approximating covariance matrices]
  {How close is the sample covariance matrix to the actual covariance matrix?}
  
\author{Roman Vershynin}

%\date{February 5, 2010}

\address{Department of Mathematics, University of Michigan, 530 Church Street, Ann Arbor, MI 48109, U.S.A.}
\email{romanv@umich.edu}

\thanks{Partially supported by NSF grant FRG DMS 0918623}

\begin{abstract}
Given a probability distribution in $\R^n$ with general (non-white) covariance, 
a classical estimator of the covariance matrix 
is the sample covariance matrix obtained from a sample of $N$ independent points. 
What is the optimal sample size $N = N(n)$ that guarantees estimation with a fixed accuracy
in the operator norm?
Suppose the distribution is supported in a centered Euclidean 
ball of radius $O(\sqrt{n})$. We conjecture that the optimal sample size is 
$N = O(n)$ for all distributions with finite fourth moment, and we prove this up to an iterated logarithmic factor.
This problem is motivated by the optimal theorem of M.~Rudelson \cite{R} which states that
$N = O(n \log n)$ for distributions with finite second moment, and 
a recent result of R.~Adamczak et al. \cite{ALPT} which guarantees that $N = O(n)$ for 
sub-exponential distributions.
\end{abstract}

\maketitle

\section{Introduction}
%--------------------------------------

\subsection{Approximation problem for covariance marices}
Estimation of covariance matrices of high dimensional distributions is a basic problem 
in multivariate statistics. It arises in diverse applications such as 
signal processing \cite{KV}, 
genomics \cite{SS}, 
financial mathematics \cite{LW},
pattern recognition \cite{DKPN},
geometric functional analysis \cite{R} and computational geometry \cite{ALPT}.
The classical and simplest estimator of a covariance matrix is the sample covariance matrix. 
Unfortunately, the spectral theory of sample covariance matrices has not
been well developed except for product distributions (or affine transformations thereof) 
where one can rely on random matrix theory for matrices with independent entries. 
This paper addresses the following basic question: 
how well does the sample covariance matrix approximate the actual covariance matrix
in the operator norm?

We consider a mean zero random vector $X$ in a high dimensional space $\R^n$
and $N$ independent copies $X_1,\ldots,X_N$ of $X$.
We would like to approximate the covariance matrix of $X$
$$
\Sigma = \E X \otimes X = \E XX^T
$$
by the sample covariance matrix
$$
\Sigma_N = \frac{1}{N} \sum_{i=1}^N X_i \otimes X_i.
$$

\begin{problem}
  Determine the minimal sample size $N = N(n,\e)$ that guarantees with high probability 
  (say, $0.99$) that the sample covariance matrix $\Sigma_N$ approximates the actual 
  covariance matrix $\Sigma$ with accuracy $\e$ in the operator norm $\ell_2 \to \ell_2$, i.e. so that 
  \begin{equation}							\label{Sigma Sigman}
  \|\Sigma - \Sigma_N\| \le \e.
  \end{equation}
\end{problem}

The use of the operator norm in this problem allows one a good grasp of the spectrum of $\Sigma$, as 
each eigenvalue of $\Sigma$ would lie within $\e$ from the corresponding eigenvalue of $\Sigma_N$.

It is common for today's applications to operate with increasingly large number of parameters $n$,
and to require that sample sizes $N$ be moderate compared with $n$.
As we impose no a priori structure on the covariance matrix, 
we must have $N \ge n$ for dimension reasons.
Note that for some structured covariance matrices, such as sparse or having an off diagonal decay,
one can sometimes achieve $N$ smaller than $n$ and even comparable to $\log n$, 
by transforming the sample covariance matrix in order to adhere to the same 
structure (e.g. by shrinkage of eigenvalues or thresholding of entries).
We will not consider structured covariance matrices in this paper; 
see e.g. \cite{RLZ} and \cite{LV}.

\subsection{Two examples}
The most extensively studied model in random matrix theory is where $X$ is a random vector 
with independent coordinates.
However, independence of coordinates can not be justified in some important applications,
and in this paper we shall consider general random vectors.
Let us illustrate this point with two well studied examples.

Consider some non-random vectors $x_1, \ldots, x_M$ in $\R^n$ which satisfy Parseval's identity 
(up to normalization):
\begin{equation}							\label{parseval}
\frac{1}{M} \sum_{j=1}^M \< x_j, x \> ^2 = \|x\|_2^2
	\quad \text{for all } x \in \R^n.
\end{equation}
Such generalizations of orthogonal bases $(x_j)$ are called tight frames.
They arise in convex geometry via John's theorem on contact points of convex bodies
\cite{Ball} and 
in signal processing as a convenient mean to introduce redundancy into signal representations
\cite{KC}. 
From a probabilistic point of view, we can regard the normalized sum in \eqref{parseval} as 
the expected value of a certain random variable. 
Indeed, Parseval's identity \eqref{parseval} amounts to $\frac{1}{M} \sum_{j=1}^M x_j \otimes x_j = I$. 
Once we introduce a random vector $X$ uniformly distributed in the set of $M$ points $\{x_1, \ldots, x_M\}$, 
Parseval's identity will read as $\E X \otimes X = I$. 
In other words, the covariance matrix of $X$ is identity, $\Sigma=I$.
Note that there is no reason to assume that the coordinates of $X$ are independent.

Suppose further that the covariance matrix of $X$ can be approximated 
by the sample covariance matrix $\Sigma_N$ for 
some moderate sample size $N = N(n,\e)$. Such an approximation $\|\Sigma_N - I\| \le \e$ means simply that 
a random subset of $N$ vectors $\{ x_{j_1}, \ldots, x_{j_N} \}$ 
taken from the tight frame $\{x_1,\ldots,x_M\}$ independently and 
with replacement is still an approximate tight frame:
$$
(1-\e) \|x\|_2^2 \le \frac{1}{N} \sum_{i=1}^N \< x_{j_i}, x \> ^2 \le (1+\e) \|x\|_2^2
	\quad \text{for all } x \in \R^n.
$$
In other words, a small random subset of a tight frame is still an approximate tight frame;
the size of this subset $N$ does not even depend on the frame size $M$.
For applications of this type of results in communications see \cite{V frames}.

Another extensively studied class of examples is the uniform distribution on a convex body $K$ in $\R^n$.
A number of algorithms in computational convex geometry (for volume computing and optimization)
rely on covariance estimation in order to put $K$ in the isotropic position, see \cite{KLS, KR}.
Note that in this class of examples, the random vector uniformly distributed in $K$ typically does not 
have independent coordinates.

\subsection{Sub-gaussian and sub-exponential distributions}
Known results on the approximation problem differ depending on the moment assumptions on the
distribution. The simplest case is when $X$ is a {\em sub-gaussian random vector} in $\R^n$, thus satisfying 
for some $L$ that
\begin{equation}							\label{sub-gaussian}
\P (|\< X, x\> | > t) \le 2 e^{-t^2/L^2}
	\quad \text{for $t > 0$ and $x \in S^{n-1}$}.
\end{equation}
Examples of sub-gaussian distributions with $L = O(1)$ include the standard Gaussian random 
distribution in $\R^n$, the uniform distribution on the cube $[-1,1]^n$, but not the uniform distribution 
on the unit octahedron $\{ x \in \R^n :\; |x_1| + \cdots + |x_n| \le 1\}$.
For sub-gaussian distributions in $\R^n$, the optimal sample size in the approximation problem 
\eqref{Sigma Sigman} is linear in the dimension, thus $N = O_{L,\e}(n)$. 
This known fact follows from a large deviation inequality and an $\e$-net argument,
see Proposition \ref{subgaussian prop} below.

Significant difficulties arise when one tries to extend this result to the larger class 
of {\em sub-exponential random vectors} $X$, 
which only satisfy \eqref{sub-gaussian} with $t^2/L^2$ replaced by $t/L$. This class is important because,
as follows from Brunn-Minkowski inequality,
the uniform distribution on every convex body $K$ is sub-exponential 
provided that the covariance matrix is identity (see \cite[Section~2.2.(b$_3$)]{GM Euclidean}).
For the uniform distributions on convex bodies, a result of J.~Bourgain \cite{Bo} guaranteed
approximation of covariance matrices with sample size slightly larger than linear 
in the dimension, $N = O_\e(n \log^3 n)$.
Around the same time, a slightly better bound $N = O_\e(n \log^2 n)$ was proved by M.~Rudelson \cite{R}. 
It was subsequently improved to $N = O_\e(n \log n)$ 
for convex bodies symmetric with respect to the coordinate hyperplanes
by A.~Giannopoulos et al. \cite{GHT}, and for general convex bodies by G.~Paouris \cite{Pa}.
Finally, an optimal estimate $N = O_\e(n)$ was obtained by G.~Aubrun \cite{Au} 
for convex bodies with the symmetry assumption as above, and for general convex bodies 
by R.~Adamczak et al. \cite{ALPT}.
The result in \cite{ALPT} is actually valid for all sub-exponential distributions supported in a ball of 
radius $O(\sqrt{n})$. Thus, if $X$ is a random vector in $\R^n$ that satisfies for some $K,L$ that
\begin{equation}							\label{sub-exponential}
\|X\|_2 \le K \sqrt{n} \text{ a.s.,} \quad 
\P (|\< X, x\> | > t) \le 2 e^{-t/L}
	\quad \text{for $t > 0$ and $x \in S^{n-1}$}
\end{equation}
then the optimal sample size is $N = O_{K,L,\e}(n)$.

The boundedness assumption $\|X\|_2 = O(\sqrt{n})$ is usually non-restrictive, 
since many natural distributions satisfy this bound with overwhelming probability. 
For example, the standard Gaussian random 
vector in $\R^n$ satisfies this with probability at least $1-e^{-n}$. 
It follows by union bound that for any sample size $N \ll e^n$, 
all independent vectors in the sample $X_1,\ldots,X_N$ satisfy this inequality simultaneously
with overwhelming probability. Therefore, by truncation one may assume without loss of generality 
that $\|X\|_2 = O(\sqrt{n})$. A similar reasoning is valid for uniform distributions on convex bodies.
In this case one can use the concentration result of G.~Paouris \cite{Pa} 
which implies that $\|X\|_2 = O(\sqrt{n})$ with 
probability at least $1 - e^{-\sqrt{n}}$.

\subsection{Distributions with finite moments}
Unfortunately, the class of sub-exponential distributions is too restrictive 
for many natural applications.
For example, discrete distributions in $\R^n$ supported on less than $e^{O(\sqrt{n})}$ points
are usually not sub-exponential. 
Indeed, suppose a random vector $X$ takes values in some set of $M$ vectors of Euclidean length $\sqrt{n}$. 
Then the unit vector $x$ pointing to the most likely value of $X$ witnesses that $\P (|\< X, x\> | = \sqrt{n}) \ge 1/M$.
It follows that in order for the random vector $X$ to be sub-exponential with $L = O(1)$, 
it must be supported on a set of size $M \ge e^{c\sqrt{n}}$. 
However, in applications such as \eqref{parseval} it is desirable to have a result valid for distributions 
on sets of moderate sizes $M$, e.g. polynomial or even linear in dimension $n$.
This may also be desirable in modern statistical applications, which typically operate with large 
number of parameters $n$ that may not be exponentially smaller than the population size $M$.

So far, there has been only one approximation result with very weak assumptions on the distribution.
M.~Rudelson \cite{R} showed that if a random vector $X$ in $\R^n$ satisfies 
\begin{equation}							\label{second moment}
\|X\|_2 \le K \sqrt{n} \text{ a.s.,} \quad 
\E \< X, x\> ^2 \le L^2 
	\quad \text{for } x \in S^{n-1}
\end{equation}
then the minimal sample size that guarantees approximation \eqref{Sigma Sigman} 
is $N = O_{K,L,\e}(n \log n)$. The second moment assumption in \eqref{second moment}
is very weak; it is equivalent to the boundedness of the covariance matrix, $\|\Sigma\| \le L$. 
The logarithmic oversampling factor is necessary in this extremely general result, 
as can be seen from the example of the uniform 
distribution on the set of $n$ vectors of Euclidean length $\sqrt{n}$. 
The coupon collector's problem calls for
the size $N \gtrsim n \log n$ in order for the sample $\{X_1,\ldots,X_N\}$ to contain all these vectors,
which is obviously required for a nontrivial covariance approximation.

\medskip
There is clearly a big gap between the sub-exponential assumption \eqref{sub-exponential}
where the optimal size is $N \sim n$ and the weakest second moment assumption \eqref{second moment}
where the optimal size is $N \sim n \log n$. It would be useful to classify the distributions for which 
the logarithmic oversampling is needed. The picture is far from complete -- the uniform distributions
on convex bodies in $\R^n$ for which we now know that the logarithmic oversampling is not needed
are very far from the uniform distributions on $O(n)$ points for which the logarithmic oversampling is needed. 
We conjecture that the logarithmic oversampling is not needed for all distributions with $q$-th moment
with appropriate absolute constant $q$; probably $q = 4$ suffices or even any $q>2$. We will thus assume
that

\begin{equation}							\label{qth moment}
\|X\|_2 \le K \sqrt{n} \text{ a.s.,} \quad 
\E |\< X, x\> | ^q \le L^q 
	\quad \text{for } x \in S^{n-1}.
\end{equation}

\begin{conjecture}
  Let $X$ be a random vector in $\R^n$ that satisfies the moment assumption 
  \eqref{qth moment} for some appropriate absolute constant $q$ and some $K$, $L$. Let $\e > 0$. 
  Then, with high probability, the sample size $N \gtrsim_{K,L,\e} n$ 
  suffices to approximate the covariance matrix $\Sigma$ of $X$ 
  by the sample covariance matrix $\Sigma_N$
  in the operator norm: $\|\Sigma - \Sigma_N\| \le \e$.
\end{conjecture}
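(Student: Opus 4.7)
\emph{Symmetrization.} Start with the standard desymmetrization
\[
\E\|\Sigma - \Sigma_N\| \;\le\; \frac{2}{N}\,\E\Big\|\sum_{i=1}^N \xi_i X_i \otimes X_i\Big\|
\;=\; \frac{2}{N}\,\E \sup_{x\in S^{n-1}}\Big|\sum_{i=1}^N \xi_i \<X_i,x\>^2\Big|,
\]
where $(\xi_i)$ are independent Rademacher signs independent of $(X_i)$. The goal reduces to controlling this Rademacher supremum, and by passing to a $\tfrac14$-net of $S^{n-1}$ of cardinality $e^{Cn}$ (standard for operator norms of symmetric matrices, via $\|A\|\le 2\max_{x\in\NN}|\<Ax,x\>|$) one can focus on one direction $x$ at a time, at the cost of an $e^{Cn}$ union bound.

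\emph{Direction-dependent truncation.} For a threshold $M$ to be tuned, split $\<X_i,x\>^2$ into its piece on $\{|\<X_i,x\>|\le M\}$ and its complementary tail. The bounded piece has magnitude $\le M^2$ and, under \eqref{qth moment} with $q=4$, variance $\le L^4$; Bernstein's inequality then gives at a fixed $x$ the deviation bound $\exp(-c\min(\e^2 N/L^4,\,\e N/M^2))$. Choosing $M^2\asymp \e N/n$ makes both exponents dominate $n$, so the $e^{Cn}$ union bound absorbs and the bounded contribution is $\le\e$ with probability $1-e^{-cn}$ provided $N\gtrsim_{L,\e} n$. The expectation of the tail piece is controlled by Markov,
\[
\E \<X,x\>^2\,\one_{|\<X,x\>|>M}\;\le\; L^q/M^{q-2},
\]
which for our $M$ and $q=4$ becomes $\lesssim L^4 n/(\e N)$, harmless for $N\gtrsim L^4 n/\e^2$.

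\emph{Heavy-tail deviations and the main obstacle.} The crux is the stochastic deviation of the tail piece $\tfrac1N\sum_i\xi_i \<X_i,x\>^2\one_{|\<X_i,x\>|>M}$. For each fixed $x$ only $\sim NL^q/M^q$ indices are ``bad'', but as $x$ varies over $S^{n-1}$ these bad sets vary too, so a naive net argument loses a factor of $\log n$ (essentially Rudelson's bound) or, via finer multi-scale chaining, only an iterated-logarithmic factor (what this paper achieves). My plan to close the full conjecture would be a Talagrand-style generic chaining: multi-scale nets $\NN_k\subset S^{n-1}$ with $\log|\NN_k|\lesssim 2^{-2k}n$, bounding chain increments $\<X_i,x-y\>^2$ with the aid of $\E|\<X,x-y\>|^q\le L^q\|x-y\|^q$, coupled with a combinatorial estimate on the joint configuration of the $X_i$ that controls how many of them can simultaneously be spiky over any low-complexity family of directions. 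The hard part will be eliminating the residual iterated-log factor: under only a fourth-moment hypothesis the chain increments are heavy-tailed rather than sub-exponential, so classical chaining bounds are suboptimal, and one probably needs a fresh matrix Bernstein / matrix Bennett type estimate for rank-one sums assuming only bounded fourth moments.
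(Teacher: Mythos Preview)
The statement you are attempting is labelled a \emph{Conjecture} in the paper and is not proved there; the paper establishes only Theorem~\ref{main}, which carries an extra $(\log\log n)^2$ factor and exponent $\tfrac12-\tfrac2q$ in place of $\tfrac12$. There is thus no ``paper's proof'' of this statement to compare against, and your proposal should be read as an attack on an open problem.

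Your sketch is candid about where it stalls. The symmetrization, the net reduction for the operator norm, and the Bernstein bound on the truncated piece are standard and essentially correct. The heavy-tail piece is the entire difficulty, and you do not resolve it: you propose Talagrand-style generic chaining with increments controlled only through $\E|\<X,x-y\>|^q\le L^q\|x-y\|^q$, but chaining with merely polynomial-tail increments is precisely the regime in which one typically \emph{incurs} logarithmic losses rather than removes them, and you supply no mechanism to avoid this. Your closing sentence (``one probably needs a fresh matrix Bernstein\ldots'') concedes the gap.

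It is worth recording that the paper's route to its partial result is entirely different from chaining. After Bourgain's truncation, the core step is a deterministic \emph{decoupling} (Proposition~\ref{decoupling}): starting from a direction $x$ on which $\sum_{i\in E}\<X_i,x\>^2$ is too large, one invokes a structural lemma on slowly divergent series (Proposition~\ref{structure}) together with Maurey's empirical method to produce disjoint index sets $I,J\subseteq E$ with $|J|\le\delta|I|$ and a unit vector $y\in\Span(X_j)_{j\in J}$ such that $\<X_i,y\>^2$ is uniformly large for all $i\in I$. Conditioning on $(X_j)_{j\in J}$ then restores independence of the events $\{\<X_i,y\>\text{ large}\}_{i\in I}$ and permits a union bound over a net of the low-dimensional sphere in that span. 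The two $\log\log n$ factors arise from the structure proposition, not from any chaining. So your program and the paper's method diverge exactly at the hard point, and neither closes the conjecture.
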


In this paper we prove the Conjecture up to an iterated logarithmic factor.

\begin{theorem}											\label{main}
  Consider a random vector $X$ in $\R^n$ ($n \ge 4$) which satisfies moment assumptions \eqref{qth moment} 
  for some $q>4$ and some $K$, $L$. Let $\d > 0$. Then, with probability at least $1-\d$,
  the covariance matrix $\Sigma$ of $X$ can be approximated by the sample covariance matrix
  $\Sigma_N$ as 
  $$
  \|\Sigma - \Sigma_N\|
    \lesssim_{q,K,L,\d} (\log \log n)^2 \Big( \frac{n}{N} \Big)^{\frac{1}{2} - \frac{2}{q}}.
  $$ 
\end{theorem}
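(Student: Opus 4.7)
The plan is to bound the expected operator norm $\E \|\Sigma - \Sigma_N\|$ and then deduce the probability statement via Markov's inequality at the cost of a factor $1/\d$. Expanding the operator norm as a supremum over $S^{n-1}$, one is reduced to controlling the empirical process
\[
\E \sup_{x \in S^{n-1}} \Bigl| \frac{1}{N}\sum_{i=1}^N \bigl(\langle X_i, x\rangle^2 - \E \langle X, x\rangle^2\bigr) \Bigr|,
\]
and a standard symmetrization replaces this, up to a factor of $2$, by the Rademacher process indexed by $x \in S^{n-1}$, namely $\frac{1}{N}\sum_i \e_i \langle X_i, x\rangle^2$. A direct application of Rudelson's deviation inequality would cost an extra $\sqrt{\log n}$ factor, which is exactly what we need to avoid.

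The key idea is a truncation that separates ``typical'' from ``atypical'' inner products. For a threshold $\tau$ to be chosen, split
\[
\langle X_i, x\rangle^2 = \langle X_i, x\rangle^2 \one_{|\langle X_i, x\rangle| \le \tau} + \langle X_i, x\rangle^2 \one_{|\langle X_i, x\rangle| > \tau}.
\]
On the tail, the pointwise domination $\langle X_i, x\rangle^2 \one_{|\langle X_i, x\rangle| > \tau} \le \tau^{2-q}\,|\langle X_i, x\rangle|^q$ combined with the $q$-th moment assumption \eqref{qth moment} gives a contribution of order $\tau^{2-q} L^q$ uniformly in $x$, which is small for $q > 4$ provided $\tau$ is not too small. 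On the bounded part, every summand is deterministically at most $\tau^2$, which places us in a sub-exponential regime where the theorem of \cite{ALPT} can be applied (essentially as a black box after conditioning) to yield a bound of order roughly $\tau\sqrt{n/N}$ up to iterated logarithmic factors. Balancing the two contributions in $\tau$ produces the desired rate $(n/N)^{1/2 - 2/q}$ and explains the restriction $q > 4$.

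The principal obstacle is that the truncation event $\{|\langle X_i, x\rangle| \le \tau\}$ depends on the direction $x$, so the truncated vectors $X_i\one_{|\langle X_i, x\rangle| \le \tau}$ are not a fixed sample, and the theorem of \cite{ALPT} does not apply literally. To handle this I would decompose along a dyadic sequence of thresholds $\tau_k = 2^k$, with $k$ ranging over an interval of length $O(\log n)$, and control each level set $\{2^k < |\langle X_i, x\rangle| \le 2^{k+1}\}$ by a Talagrand-type concentration uniformly over an $\e$-net of $S^{n-1}$. A careful execution should replace the naive $\log n$ union bound by $\log\log n$; performing this peeling twice -- once to optimize the truncation level and once inside the chaining that reduces the sphere to a net -- accounts for the $(\log\log n)^2$ factor in the final bound. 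The sharpness of these two chaining steps, and in particular avoiding any leakage of a genuine $\log n$ from Rudelson-type inequalities, is the critical technical point.
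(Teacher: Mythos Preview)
Your outline correctly identifies Bourgain's truncation as the starting point, and the paper does exactly this: it splits at a level $B=(N/n)^{2/q}$ and bounds the tail contribution by $\tau^{2-q}L^q$ just as you do. But the heart of the proof is entirely missing from your proposal. After truncation, the quantity that must be controlled is
\[
\sup_{x\in S^{n-1}}\frac{1}{N}\sum_{i\in E_B(x)}\langle X_i,x\rangle^2,
\]
where $E_B(x)$ is the random, $x$-dependent set of large coefficients. You propose to handle this by invoking \cite{ALPT} as a black box on the bounded part and then patching the $x$-dependence by dyadic peeling plus Talagrand concentration, asserting that ``a careful execution should replace the naive $\log n$ union bound by $\log\log n$.'' This is the gap: there is no mechanism in your outline that would turn $\log n$ into $\log\log n$. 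Peeling over $O(\log n)$ dyadic levels and taking a union bound over exponential-size nets gives back $\log n$; this is essentially Rudelson's argument, and it is known to be sharp at the generality of second moments. The result of \cite{ALPT} cannot be used here either, since it requires a sub-exponential tail on every marginal, not merely boundedness of a direction-dependent truncation.

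What the paper actually does at this point is substantially new. It proves a deterministic structure theorem (Proposition~\ref{structure}) for sequences with $\|b\|_{1,\infty}\le 1$ and $\|b\|_1\gtrsim(\log\log m)^2$: such a sequence must contain a block $I_1$ on which $b_i\ge 1/(ln_1)$ with the \emph{regularity} constraint $2^{l/2}\le m/n_1$, and this block is hereditary in a precise sense that allows passage to a subset $I_2$ dominating any prescribed $\lambda$. This structure is then fed into a decoupling argument (Proposition~\ref{decoupling}) that, via Maurey's empirical method, produces disjoint index sets $I,J$ with $|J|\le\delta|I|$ and a vector $y\in\Span(X_j)_{j\in J}$ with $\langle X_i,y\rangle^2$ large for all $i\in I$. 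Only after this decoupling can one condition on $(X_j)_{j\in J}$, take a net in the $|J|$-dimensional span, and exploit independence of $(X_i)_{i\in I}$. The $(\log\log n)^2$ arises from two uses of the structure theorem, not from any chaining or peeling. Your proposal does not contain, or hint at, either of these ingredients.
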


\begin{remarks}
1. The notation $a \lesssim_{q,K,L,\d} b$ means that $a \le C(q,K,L,\d) b$ where $C(q,K,L,\d)$ depends
only on the parameters $q,K,L,\d$; see Section~\ref{s: notation} for more notation. 
The logarithms are to the base $2$. 
We put the restriction $n \ge 4$ only to ensure that $\log \log n \ge 1$;
Theorem~\ref{main} and other results below clearly hold for dimensions $n = 1,2,3$ even without 
the iterated logarithmic factors. 

2. It follows that for every $\e>0$, 
the desired approximation $\|\Sigma - \Sigma_N\| \le \e$ is guaranteed if the sample 
has size
$$
N \gtrsim_{q,K,L,\d,\e} (\log \log n)^p n 		\quad \text{where } \frac{1}{p} + \frac{1}{q} = \frac{1}{4}.
$$

3. A similar result holds for independent random vectors $X_1, \ldots, X_N$ 
that are not necessarily identically distributed; we will prove this general result in 
Theorem~\ref{main full}. 

4. The boundedness assumption $\|X\|_2 \le K \sqrt{n}$ in \eqref{qth moment} can often be weakened or
even dropped by a simple modification of Theorem~\ref{main}. This happens, for example, if 
$\max_{i \le N} \|X_i\| = O(\sqrt{n})$ holds with high probability, as one can apply Theorem~\ref{main} 
conditionally on this event. We refer the reader to a thorough discussion of the boundedness assumption 
in Section~1.3 of \cite{V}.
\end{remarks}

\subsection{Extreme eigenvalues of sample covariance matrices}
Theorem~\ref{main} can be used to analyze the spectrum of sample covariance matrices $\Sigma_N$.
The case when the random vector $X$ has i.i.d. coordinates is most studied in random matrix theory. 
Suppose that both $N, n \to \infty$ while the aspect ratio $n/N \to \b \in (0,1]$.
If the coordinates of $X$ have unit variance and finite fourth moment, then clearly $\Sigma = I$.
The largest eigenvalue $\lambda_1(\Sigma_N)$ then converges a.s. to $(1 + \sqrt{\b})^2$,
and the smallest eigenvalue $\lambda_n(\Sigma_N)$ converges a.s. to $(1 - \sqrt{\b})^2$,
see \cite{BY}. For more on the extreme eigenvalues in both asymptotic regime ($N,n \to \infty$) and 
non-asymptotic regime ($N,n$ fixed), see \cite{RV ICM}.

Without independence of the coordinates, analyzing the spectrum of sample covariance matrices $\Sigma_N$
becomes significantly harder. Suppose that $\Sigma = I$.
For sub-exponential distributions, i.e. those satisfying \eqref{sub-exponential}, 
it was proved in \cite{ALPT1} that 
$$
1 - O(\sqrt{\b}) 
\le \lambda_n(\Sigma_N)
\le \lambda_1(\Sigma_N)
\le 1 + O(\sqrt{\b}).
$$ 
(A weaker version with extra $\log(1/\b)$ factors was proved earlier by the same authors in \cite{ALPT}.)
Under only finite moment assumption \eqref{qth moment}, Theorem~\ref{main} clearly yields
$$
1 - O(\log \log n) \b^{\frac{1}{2} - \frac{2}{q}}
\le \lambda_n(\Sigma_N)
\le \lambda_1(\Sigma_N)
\le 1 + O(\log \log n) \b^{\frac{1}{2} - \frac{2}{q}}.
$$
Note that for large exponents $q$, the factor $\b^{\frac{1}{2} - \frac{2}{q}}$ becomes close to $\sqrt{\b}$.

\subsection{Norms of random matrices with independent columns}
One can interpret the results of this paper in terms of random matrices with independent columns.
Indeed, consider an $n \times N$ random matrix $A = [X_1,\ldots,X_N]$ whose columns $X_1,\ldots,X_N$ 
are drawn independently from some distribution on $\R^n$. The sample covariance 
matrix of this distribution is simply $\Sigma_N = \frac{1}{N}AA^T$, so the eigenvalues of $N^{1/2} \Sigma_N$
are the singular values of $A$. In particular, under the same finite moment assumptions 
as in Theorem~\ref{main}, we obtain the bound on the operator norm 
\begin{equation}										\label{norm intro}
\|A\| \lesssim_{q,K,L,\d} \log \log N \cdot (\sqrt{n} + \sqrt{N}).
\end{equation}
This follows from a result leading to Theorem~\ref{main}, see Corollary~\ref{independent columns}.
The bound is optimal up to the $\log \log N$ factor for matrices with i.i.d. entries, 
because the operator norm is 
bounded below by the Euclidean norm of any column and any row.
For random matrices with independent entries, estimate \eqref{norm intro} 
follows (under the fourth moment assumption)
from more general bounds by Seginer \cite{Se} and Latala \cite{La}, and even without the $\log \log N$ factor. 
Without independence of entries, this bound was proved by the author \cite{V products}
for products of random matrices with independent entries and deterministic matrices, 
and also without the $\log \log N$ factor.

\subsection{Organization of the rest of the paper}
In the beginning of Section~\ref{s: outline} we outline the heuristics of our argument.
We emphasize its two main ingredients -- structure of divergent series 
and a decoupling principle. We finish that section with some preliminary
material -- notation (Section~\ref{s: notation}), a known argument 
that solves the approximation problem for sub-gaussian distributions
(Section~\ref{s: subgaussian}), and the previous weaker result of the author \cite{V}
on the approximation problem in the weak $\ell_2$ norm (Section~\ref{s: weak ell2}).

The heart of the paper are Sections~\ref{s: structure} and \ref{s: decoupling}.
In Section~\ref{s: structure} we study the structure of series 
that diverge faster than the iterated logarithm. 
This structure is used in Section~\ref{s: decoupling} to deduce a decoupling principle.
In Section~\ref{s: random matrices} we apply the decoupling principle
to norms of random matrices. 
Specifically, in Theorem~\ref{norm} we estimate the norm of $\sum_{i \in E} X_i \otimes X_i$
uniformly over subsets $E$. We interpret this in Corollary~\ref{independent columns}
as a norm estimate for random matrices with independent columns.
In Section~\ref{s: approximation}, we deduce the general form of our main result 
on approximation of covariance matrices, Theorem~\ref{main full}.

\subsection*{Acknowledgement}
The author is grateful to the referee for useful suggestions.

\section{Outline of the method and preliminaries}				\label{s: outline}
%--------------------------------------

Let us now outline the two main ingredients of our method, which are a new structure theorem for divergent series 
and a new decoupling principle. For the sake of simplicity in this discussion, we shall now concentrate on proving 
the weaker upper bound $\|\Sigma_N\| = O(1)$ in the case $N = n$. Once this simpler case is understood,
the full Theorem~\ref{main} will require a little extra effort using a now standard truncation argument 
due to J.~Bourgain \cite{Bo}.
We thus consider independent copies $X_1,\ldots,X_n$ of a random vector $X$ in $\R^n$ satisfying the 
finite moment assumptions \eqref{qth moment}. We would like 
to show with high probability that 
$$
\| \Sigma_n \| = \sup_{x \in S^{n-1}} \frac{1}{n} \sum_{i=1}^n \< X_i, x\> ^2
= O(1).
$$
In this expression we may recongize a stochastic process indexed by vectors $x$ on the sphere. 
For each fixed $x$, 
we have to control the sum of independent random variables $\sum_i \< X_i, x \> ^2$ with finite moments. 
Suppose the bad event occurs -- for some $x$, this sum is significantly larger than $n$. 
Unfortunately, because of the heavy tails of these random variables, the bad event may occur with polynomial 
rather than exponential probability $n^{-O(1)}$. This is too weak to control these sums for 
all $x$ simultaneously on the $n$-dimensional sphere, where $\e$-nets have exponential sizes in $n$.
So, instead of working with sums of independent random variables, we try to locate some structure in the
summands responsible for the largeness of the sum. 

\subsection{Structure of divergent series}				\label{s: structure intro}
More generally, we shall study the structure of divergent series 
$\sum_i b_i = \infty$, where $b_i \ge 0$.
Let us first suppose that the series diverges faster than logarithmic function, thus 
$$
\sum_{i=1}^n b_i \gg \log n
\quad \text{for some $n \ge 2$}.
$$
Comparing with the harmonic series we see that the non-increasing rearrangement $b^*_i$ 
of the coefficients at some point must be large:
$$
b^*_{n_1} \gg 1/{n_1} 
\quad \text{for some } n_1 \le n.
$$ 
In other words, one can find $n_1$ large terms of the sum: 
there exists an index set $I \subset [n]$ of size $|I| = n_1$ and such that 
$b_i \gg 1/n_1$ for $i \in I$.
This collection of large terms $(b_i)_{i \in I}$ forms a desired structure responsible for the largeness 
of the series $\sum_i b_i$.
Such a structure is well suited to our applications where $b_i$ are independent random variables,
$b_i = \< X_i, x\> ^2 / n$. Indeed, the events $\{ b_i \gg 1/n_1 \}$ are independent, 
and the probability of each such event is easily controlled by finite moment assumptions
\eqref{moment assumptions} through Markov's inequality. This line was developed 
in \cite{V}, but it clearly leads to a loss of logarithmic factor which we are trying to avoid in 
the present paper. 

We will work on the next level of precision, thus studying the structure of
series that diverge slower than the logarithmic function but faster than the iterated logarithm. 
So let us assume that
$$
b^*_i \lesssim 1/i \text{ for all $i$}; \quad 
\sum_{i=1}^n b_i \gg \log \log n
\text{ for some $n \ge 4$}.
$$ 
In Proposition~\ref{structure} we will locate almost the same structure as 
we had for logarithmically divergent series, except up to some factor 
$\log \log n \ll l \lesssim  \log n$, as follows.
For some $n_1 \le n$ there exists an index set $I \subset [n]$ of size $|I| = n_1$, 
such that 
$$
b_i \gg \frac{1}{l n_1} \text{ for } i \in I, \quad
\text{and moreover }
\frac{n}{n_1} \ge 2^{l/2}.
$$

\subsection{Decoupling}						\label{s: decoupling intro}
The structure that we found is well suited to our application where $b_i$ are independent random variables
$b_i = \< X_i, x\> ^2 / n$. In this case we have
\begin{equation}							\label{large on I}
\< X_i, x\> ^2 \gg \frac{n}{l n_1} 
\gtrsim \frac{n}{n_1} \big/ \log \Big( \frac{2n}{n_1} \Big) 
\gtrsim (n/n_1)^{1-o(1)}
\quad \text{for } i \in I.
\end{equation}
The probability that this happens is again easy to control using independence of $\< X_i, x\> $
for fixed $x$, finite moment assumptions \eqref{moment assumptions} and Markov's inequality.
Since there are $\binom{n}{n_1}$ number of ways to choose the subset $I$, 
the probability of the event in \eqref{large on I} is bounded by
$$
\binom{n}{n_1} \P \big\{ \< X_i, x \> ^2 \gg (n/n_1)^{1-o(1)} \big\}^{n_1}
\le \binom{n}{n_1} (10n/n_1)^{-(1-o(1))q/2}
\ll e^{-n_1}
$$
where the last inequality follows because
$\binom{n}{n_1} \le (en/n_1)^{n_1}$ and since $q > 2$.

Our next task is to unfix $x \in S^{n-1}$. The exponential probability estimate we obtained 
allows us to take the union bound over all $x$ in the unit sphere 
of any fixed $n_1$-dimensional subspace, since this sphere has an $\e$-net of size exponential in $n_1$.
We can indeed assume without loss of generality that the vector $x$ in our structural event 
\eqref{large on I} lies in the span of $(X_i)_{i \in I}$ which is $n_1$-dimensional; 
this can be done by projecting $x$ onto this span if necessary.
Unfortunately, this obviously makes $x$ depend on the random vectors $(X_i)_{i \in I}$
and destroys the independence of random variables $ \< X_i, x \> $. 
This hurdle calls for a decoupling mechanism,
which would make $x$ in the structural event \eqref{large on I} 
depend on some small fraction of the vectors $(X_i)_{i \in I}$. 
One would then condition on this fraction of random vectors
and use the structural event \eqref{large on I} for the other half, 
which would quickly lead to completion of the argument.

Our decoupling principle, Proposition~\ref{decoupling}, is a deterministic statement that 
works for fixed vectors $X_i$.
Loosely speaking, we assume that the structural event \eqref{large on I} holds for some $x$
in the span of $(X_i)_{i \in I}$, and we would like to force $x$
to lie in the span of a small fraction of these $X_i$. 
We write $x$ as a linear combination $x = \sum_{i \in I} c_i X_i$. The first step of decoupling is to remove 
the ``diagonal'' term $c_i X_i$ from this sum, while retaining the largeness of $\< X_i, x \> $.
This task turns out to be somewhat difficult, and it will force us to refine our structural result for divergent series
by adding a domination ingredient into it. This will be done at the cost of another $\log \log n$ factor.
After the diagonal term is removed, the number of terms in the sum for $x$ will be reduced 
by a probabilistic selection using Maurey's empirical method.

\subsection{Notation and preliminaries}				\label{s: notation}
We will use the following notation throughout this paper. 
$C$ and $c$ will stand for positive absolute constants;
$C_p$ will denote a quantity which only depends on the parameter $p$,
and similar notation will be used with more than one parameter.
For positive numbers $a$ and $b$, the asymptotic inequality 
$a \lesssim b$ means that $a \le C b$.
Similarly, inequalities of the form $a \lesssim_{p,q} b$ 
mean that $a \lesssim C_{p,q} b$.
Intervals of integers will be denoted by $[n] := \{1,\ldots,\lceil n \rceil\}$ for $n \ge 0$.
The cardinality of a finite set $I$ is denoted by $|I|$.
All logarithms will be to the base $2$.
%For clarity of presentation, we sometimes omit the floor and ceiling operations.

The non-increasing rearrangement of a finite or infinite sequence of numbers 
$a = (a_i)$ will be denoted by $(a^*_i)$. 
Recall that the $\ell_p$ norm is defined as 
$\|a\|_p = (\sum_i |a_i|^p )^{1/p}$ for $1 \le p < \infty$, and 
$\|a\|_\infty = \max_{i} |a_i|$.
We will also consider the weak $\ell_p$ norm for $1 \le p < \infty$, which is defined 
as the infimum of positive numbers $M$ for which the non-increasing rearrangement $(|a|^*_i)$ 
of the sequence $(|a_i|)$ satisfies $|a|^*_i \le M i^{-1/p}$ for all $i$. 
For sequences of finite length $n$, it follows from definition that the weak $\ell_p$ norm
is equivalent to the $\ell_p$ norm up to a $O(\log n)$ factor, thus
$\|a\|_{p,\infty} \le \|a\|_p \lesssim \log n \cdot \|a\|_{p,\infty}$
for $a \in \R^n$.

In this paper we deal with the $\ell_2 \to \ell_2$ operator norm of $n \times n$
matrices $\|A\|$, also known as spectral norm. By definition, 
$$
\|A\| = \sup_{x \in S^{n-1}} \|Ax\|_2
$$
where $S^{n-1}$ denotes the unit Euclidean sphere in $\R^n$. 
Equivalently, $\|A\|$ is the largest singular value of $A$ and
the largest eigenvalue of $\sqrt{AA^T}$. We will frequently use that for Hermitian
matrices $A$ one has
$$
\|A\| = \sup_{x \in S^{n-1}} |\< Ax, x \> |.
$$

\smallskip
It will be convenient to work in a slightly more general than in Theorem~\ref{main}, 
and consider independent random vectors $X_i$ in $\R^n$
that are not necessarily identically distributed. All we need is that 
moment assumptions \eqref{qth moment} hold uniformly for all vectors:
\begin{equation}							\label{moment assumptions}
  \|X_i\|_2 \le K \sqrt{n} \text{ a.s.},  \quad
  (\E | \< X_i, x\> |^q)^{1/q} \le L \text{ for all } x \in S^{n-1}.
\end{equation}
We can view our goal as establishing a {\em law of large numbers in the operator norm},
and with quantitative estimates on convergence. Thus we would like to show that 
the approximation error  
\begin{equation}							\label{norm is sup}
  \Big\| \frac{1}{N} \sum_{i=1}^N X_i \otimes X_i - \E X_i \otimes X_i \Big\| 
  = \sup_{x \in S^{n-1}} \Big| \frac{1}{N} \sum_{i=1}^N \< X_i, x\> ^2 - \E \< X_i, x\> ^2 \Big|
\end{equation}
is small like in Theorem~\ref{main}.

\subsection{\bf Sub-gaussian distributions}				\label{s: subgaussian}
A solution to the approximation problem is well known and easy for 
sub-gaussian random vectors, those satisfying \eqref{sub-gaussian}. 
The optimal sample size here is proportional to the dimension, thus
$N = O_{L,\e} (n)$. For the reader's convenience, we recall and prove 
a general form of this result.

\begin{proposition}[Sub-gaussian distributions]						\label{subgaussian prop}
  Consider independent random vectors $X_1,\ldots,X_N$ in $\R^n$, $N \ge n$, 
  which have sub-gaussian distribution as in \eqref{sub-gaussian} for some $L$.
  Then for every $\d > 0$ with probability at least $1-\d$ one has
   $$
   \Big\| \frac{1}{N} \sum_{i=1}^N X_i \otimes X_i - \E X_i \otimes X_i \Big\| 
    \lesssim_{L,\d} \Big( \frac{n}{N} \Big)^{\frac{1}{2}}.
   $$  
\end{proposition}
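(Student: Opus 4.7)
The plan is a standard $\varepsilon$-net argument combined with Bernstein's inequality for sub-exponential random variables; the sub-gaussian assumption makes each $\langle X_i, x\rangle^2$ well-concentrated, and an exponential tail on the $\varepsilon$-net is what allows the union bound to absorb the $9^n$ covering of the sphere.

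First, I would write the operator norm as a supremum. Since the matrix
$A := \tfrac{1}{N}\sum_{i=1}^N X_i \otimes X_i - \E X_i \otimes X_i$
is Hermitian, $\|A\| = \sup_{x \in S^{n-1}} |\langle Ax, x\rangle|$, and by the standard approximation lemma, if $\NN \subset S^{n-1}$ is a $1/4$-net then $\|A\| \le 2\sup_{x \in \NN} |\langle Ax, x\rangle|$. A volumetric estimate gives $|\NN| \le 9^n$.

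Next, fix $x \in S^{n-1}$ and set $Z_i := \langle X_i, x\rangle^2 - \E \langle X_i, x\rangle^2$. The sub-gaussian hypothesis \eqref{sub-gaussian} with parameter $L$ implies $\langle X_i, x\rangle$ is sub-gaussian (with sub-gaussian norm $\lesssim L$), so $\langle X_i, x\rangle^2$ is sub-exponential with sub-exponential norm $\lesssim L^2$, and hence so is $Z_i$. Bernstein's inequality for centered independent sub-exponential variables then gives, for every $t > 0$,
$$
\P\Bigl\{\bigl|\langle Ax, x\rangle\bigr| > t\Bigr\}
 = \P\Bigl\{\Bigl|\tfrac{1}{N}\sum_{i=1}^N Z_i\Bigr| > t\Bigr\}
\le 2\exp\!\Bigl(-c N \min(t^2/L^4,\, t/L^2)\Bigr).
$$

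Now take $t = C(L,\d)\sqrt{n/N}$. Since $N \ge n$, we have $t \le C(L,\d)$, so for suitable $C(L,\d)$ the minimum in the Bernstein exponent is achieved by the quadratic term $t^2/L^4$, and the bound becomes $2\exp(-c' C(L,\d)^2 n / L^4)$. Union-bounding over $|\NN| \le 9^n$ points,
$$
\P\bigl\{\|A\| > 2t\bigr\}
\le 9^n \cdot 2\exp\bigl(-c' C(L,\d)^2 n/L^4\bigr).
$$
Choosing $C(L,\d)$ large enough (depending only on $L$ and $\d$) so that $c' C(L,\d)^2/L^4 \ge \log 9 + 1$, say, makes this probability at most $2 e^{-n} \cdot 9^n \cdot 9^{-n}$\ldots more simply, one absorbs $\log(2/\d)$ into $C(L,\d)$ so the whole bound is at most $\d$. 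This yields $\|A\| \lesssim_{L,\d} \sqrt{n/N}$.

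There is no real obstacle; the only point requiring care is ensuring that in the Bernstein bound we are in the sub-gaussian (quadratic) regime, which is guaranteed by $N \ge n$ making the chosen $t$ bounded by a constant, and then inflating the constant $C(L,\d)$ so that both the $9^n$ entropy factor and the desired failure probability $\d$ are absorbed.
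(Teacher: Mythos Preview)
Your proposal is correct and follows essentially the same route as the paper: compute the operator norm as a supremum, pass to a finite net via a volumetric bound, apply Bernstein's inequality for sub-exponential variables to each $\langle X_i,x\rangle^2$, and union-bound over the net. The only cosmetic differences are the net parameter ($1/4$ versus the paper's $1/2$) and that you write Bernstein with the explicit $\min(t^2/L^4,\,t/L^2)$ and then argue you are in the quadratic regime, whereas the paper states the quadratic-regime bound directly.
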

One should compare this with our main result, Theorem~\ref{main}, 
which yields almost the same conclusion
under only finite moment assumptions on the distribution, 
except for an iterated logarithmic factor and a slight loss of the exponent $1/2$ 
(the latter may be inevitable when dealing with finite moments).

\smallskip

The well known proof of Proposition~\ref{subgaussian prop} is based on Bernstein's deviation 
inequality for independent random variables and an $\e$-net argument. 
The latter allows to replace the sphere $S^{n-1}$ in the computation of the
norm in \eqref{norm is sup} by a finite $\e$-net as follows.

\begin{lemma}[Computing norms on $\e$-nets]						\label{nets}
  Let $A$ be a Hermitian $n \times n$ matrix,
  and let $\NN_\e$ be an $\e$-net of the unit Euclidean sphere $S^{n-1}$
  for some $\e \in [0,1)$. Then 
  $$
  \|A\| = \sup_{x \in S^{n-1}} |\< Ax, x\> | 
  \le (1 - 2\e)^{-1} \sup_{x \in \NN_\e} |\< Ax, x\> |.
  $$  
\end{lemma}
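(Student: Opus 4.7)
The plan is to run the standard \emph{approximation on a net} argument: pass from a maximizer on the sphere to a nearby point of $\NN_\e$, then bound the resulting error by the operator norm itself and absorb it into the left-hand side.

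First, by compactness of $S^{n-1}$ and continuity of the quadratic form $x \mapsto |\<Ax,x\>|$, choose $x \in S^{n-1}$ attaining the supremum, so that $\|A\| = |\<Ax,x\>|$ (for Hermitian $A$, the operator norm equals the supremum of $|\<Ax,x\>|$, as noted in Section~\ref{s: notation}). By the definition of an $\e$-net, pick $y \in \NN_\e$ with $\|x - y\|_2 \le \e$.

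Next, I would use the bilinear identity
$$
\<Ax, x\> - \<Ay, y\> = \<Ax, x - y\> + \<A(x - y), y\>.
$$
Apply Cauchy--Schwarz and the fact that $\|x\|_2 = \|y\|_2 = 1$:
$$
|\<Ax, x\> - \<Ay, y\>| \le \|Ax\|_2 \|x - y\|_2 + \|A(x - y)\|_2 \|y\|_2 \le 2 \|A\| \, \e.
$$
Hence
$$
\|A\| = |\<Ax, x\>| \le |\<Ay, y\>| + 2\e \|A\| \le \sup_{z \in \NN_\e} |\<Az, z\>| + 2\e \|A\|.
$$
Rearranging using $\e < 1/2$ (if $\e \ge 1/2$ the claim is vacuous since $(1-2\e)^{-1}$ is non-positive or the estimate trivializes) yields
$$
\|A\| \le (1 - 2\e)^{-1} \sup_{z \in \NN_\e} |\<Az, z\>|,
$$
which is the desired inequality.

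There is no serious obstacle here; the only thing worth flagging is the bilinear splitting in the second step, which is what allows each term in the error to be controlled by $\|A\|$ times $\|x-y\|_2$ rather than by the a priori larger quantity $\|A\| \|x-y\|_2^2 / \text{something}$ one might naively fear from a second-order Taylor expansion. The constant $2$ (and hence the factor $(1-2\e)^{-1}$) comes directly from the two symmetric error terms in this splitting.
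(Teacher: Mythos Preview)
Your proof is correct and follows essentially the same approach as the paper: choose a maximizer $x$ on the sphere, approximate it by $y \in \NN_\e$, use the bilinear splitting $\<Ax,x\> - \<Ay,y\> = \<Ax,x-y\> + \<A(x-y),y\>$ to bound the error by $2\e\|A\|$, and rearrange. The paper's version is slightly terser (it does not spell out the Cauchy--Schwarz step or the caveat about $\e \ge 1/2$), but the argument is the same.
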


\begin{proof}
Let us choose $x \in S^{n-1}$ for which $\|A\| = |\< Ax, x\> |$,
and choose $y \in \NN_\e$ which approximates $x$ as $\|x - y\|_2 \le \e$.
It follows by the triangle inequality that
\begin{align*}							
|\< Ax, x\> - \< Ay, y\> |
&= |\< Ax, x-y\> + \< A(x-y), y\> |\\
&\le \|A\| \|x\|_2 \|x-y\|_2 + \|A\| \|x-y\|_2 \|y\|_2
\le 2 \|A\| \e.
\end{align*}
It follows that 
$$
|\< Ay, y \> | \ge |\< Ax, x\> | - 2 \|A\| \e = (1-2\e) \|A\|.
$$
This completes the proof.
\end{proof}

\medskip
\begin{proof}[Proof of Proposition~\ref{subgaussian prop}.]
Without loss of generality,
 we can assume that in the sub-gaussian assumption \eqref{sub-gaussian} we have $L=1$ 
by replacing $X_i$ by $X_i/L$.
Identity \eqref{norm is sup} expresses the norm in question as a supremum 
over the unit sphere $S^{n-1}$. Next, Lemma~\ref{nets} allows  
to replace the sphere in \eqref{norm is sup} by its $1/2$-net $\NN$ 
at the cost of an absolute constant factor.
Moreover, we can arrange so that the net has size $|\NN| \le 6^n$; 
this follows by a standard volumetric argument (see \cite[Lemma~9.5]{LT}).

Let us fix $x \in \NN$.
The sub-gaussian assumption on $X_i$ implies that 
the random variables $\< X_i, x\> ^2$ are sub-exponential: 
$\P (\< X_i, x\> ^2 > t) \le 2 e^{-t}$ for $t > 0$.
Bernstein's deviation inequality for independent sub-exponential random variables
(see e.g. \cite[Section 2.2.2]{VW}) yields for all $\e > 0$ that 
\begin{equation}							\label{deviation subexponential}
\P \big\{ \big| \frac{1}{N} \sum_{i=1}^N \< X_i, x\> ^2 - \E \< X_i, x\> ^2 \big| > \e \big\} 
\le 2 e^{-c \e^2 N}.
\end{equation}
Now we unfix $x$. Using \eqref{deviation subexponential} for each $x$ in the net $\NN$, we conclude by 
the union bound that the event 
$$
\big| \frac{1}{N} \sum_{i=1}^N \< X_i, x\> ^2 - \E \< X_i, x\> ^2 \big| < \e
\quad \text{for all } x \in \NN
$$
holds with probability at least 
$$
1 - |\NN| \cdot 2 e^{-c \e^2 N} 
\ge 1 - 2 e^{2n - c \e^2 N}.
$$
Now if we choose $\e^2 = (4/c) \log(2/\d) \, n/N$, this probability is 
further bounded below by $1-\d$ as required.
By the reduction from the sphere to the net 
mentioned in the beginning of the argument, this completes the proof. 
\end{proof}

\subsection{Results in the weak $\ell_2$ norm, and almost orthogonality of $X_i$}	\label{s: weak ell2}
A truncation argument of J.~Bourgain \cite{Bo} reduces the approximation 
problem to finding an upper bound on 
$$
\Big\| \sum_{i \in E} X_i \otimes X_i \Big\| 
= \sup_{x \in S^{n-1}} \sum_{i \in E} \< X_i, x\> ^2 
= \sup_{x \in S^{n-1}} \|( \< X_i, x\> )_{i \in E}\|_2^2
$$
uniformly for all index sets $E \subset [N]$ with given size.
A weaker form of this problem, with the weak $\ell_2$ norm of the sequence $\< X_i, x\> $
instead of the its $\ell_2$ norm, was studied in \cite{V}.
The following bound was proved there:

\begin{theorem}[\cite{V} Theorem~3.1]							\label{weak ell2 bound}
  Consider random vectors $X_1,\ldots,X_N$ which satisfy moment assumptions
  \eqref{moment assumptions} for some $q > 4$ and some $K$, $L$.
  Then, for every $t \ge 1$, with probability at least $1 - C t^{-0.9 q}$ one has
  $$
  \sup_{x \in S^{n-1}} \|( \< X_i, x\> )_{i \in E}\|_{2,\infty}^2 
    \lesssim_{q,K,L} n + t^2 \Big( \frac{N}{|E|} \Big)^{4/q} |E|
    \quad \text{for all } E \subseteq [N]. 
  $$
\end{theorem}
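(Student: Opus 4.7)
The plan is to show that the only way the weak $\ell_2$ norm can exceed the claimed bound is for many independent variables $\< X_i, x\>$ to be simultaneously large, an event one can control directly from the $q$-th moment assumption.

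Step one is to unfold the definition of $\|\cdot\|_{2,\infty}$: the inequality $\|(\< X_i, x\>)_{i\in E}\|_{2,\infty}^2 > M$ is equivalent to the existence of some $k \ge 1$ and a subset $F \subseteq E$ with $|F| = k$ on which $|\< X_i, x\>|^2 > M/k$. Since the claimed bound $M(E) = n + t^2(N/|E|)^{4/q}|E|$ equals $n + t^2 N^{4/q}|E|^{1-4/q}$, it is increasing in $|E|$ when $q > 4$; so for a fixed $F$ of size $k$ the tightest of the bounds over $E \supseteq F$ is obtained at $E = F$. It therefore suffices, for each $k \ge 1$, to control the event
\[
\mathcal{B}_k := \bigl\{\exists F \subseteq [N],\ |F|=k,\ \exists x \in S^{n-1}:\ |\< X_i, x\>|^2 > \tau_k^2 \text{ for all } i \in F\bigr\},
\]
where $\tau_k^2 := n/k + t^2(N/k)^{4/q}$, and show that $\sum_k \P(\mathcal{B}_k) \lesssim t^{-0.9q}$.

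Step two is a union bound for each $k$. Since $\< X_i, x\> = \< X_i, P_F x\>$ where $P_F$ projects onto $\mathrm{span}\{X_i\}_{i\in F}$, a subspace of dimension at most $k$, we may replace $x$ by its projection and use an $\e$-net $\NN_F$ of the unit sphere of this span of cardinality $\le (3/\e)^k$. The a.s.\ bound $\|X_i\|_2 \le K\sqrt{n}$ controls the approximation error by $K\sqrt{n}\,\e$, and $\e$ is chosen so this error is comfortably below $\tau_k$. For a fixed pair $(F,y)$ with $y \in \NN_F$, Markov applied to the product of independent factors gives
\[
\P\Bigl(\prod_{i\in F}|\< X_i, y\>|^q > (\tau_k/2)^{kq}\Bigr) \le (2L/\tau_k)^{kq},
\]
and unioning over the $\binom{N}{k} \le (eN/k)^k$ choices of $F$ and the $(3/\e)^k$ points of $\NN_F$ leaves a summable tail in $k$. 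The two summands of $\tau_k^2$ do exactly the work one expects: the $n/k$ summand suppresses the small-$k$ contributions where the net must be relatively coarse and the boundedness of $X_i$ is the dominant feature, while the $t^2(N/k)^{4/q}$ summand handles the bulk range where $\binom{N}{k}$ is large and the moment bound is sharp.

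The main obstacle is calibrating the net radius $\e$ and the moment tail for intermediate values of $k$, where neither summand of $\tau_k^2$ clearly dominates. A net that is too coarse makes the approximation step fail, while a net that is too fine injects extra powers of $\sqrt{n}$ into the union bound and erodes the probability exponent. Squeezing out the exponent $0.9q$ in the final tail, while keeping the bound in the clean form $n + t^2(N/|E|)^{4/q}|E|$ with no additional logarithmic or dimensional losses, is the technical heart of the argument.
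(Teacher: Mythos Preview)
The paper does not prove this theorem; it is quoted from \cite{V}, so there is no in-paper proof to compare against. Your overall skeleton---reduce to the existence of a set $F$ of size $k$ on which all $|\langle X_i,x\rangle|$ exceed $\tau_k$, project $x$ onto a $k$-dimensional span, discretize with a net, apply Markov and a union bound over $F$ and the net---is indeed the right shape. But there is a genuine gap.

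The gap is at the Markov step. You place $y$ in an $\e$-net $\NN_F$ of the unit sphere of $\mathrm{span}(X_i)_{i\in F}$; this makes $y$ a (random) function of the vectors $(X_i)_{i\in F}$. The moment hypothesis $(\E|\langle X_i,x\rangle|^q)^{1/q}\le L$ holds for each \emph{fixed} $x\in S^{n-1}$, not for directions that depend on $X_i$ itself. Hence the bound
\[
\P\Bigl(\prod_{i\in F}|\langle X_i,y\rangle|^q>(\tau_k/2)^{kq}\Bigr)\le(2L/\tau_k)^{kq}
\]
is not justified: you cannot invoke independence and the moment bound for $\langle X_i,y\rangle$ when $i\in F$ and $y$ lives in the span of those same $X_i$. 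Conditioning on $(X_i)_{i\in F}$ does not help either, since after conditioning these inner products are constants and there is nothing probabilistic left to estimate.

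This is precisely the obstacle that forces a decoupling step, and it is how \cite{V} proceeds (cf.\ the discussion preceding Proposition~\ref{decoupling} in the present paper, which upgrades that earlier decoupling). One must produce \emph{disjoint} index sets $I,J\subseteq[N]$ with $|J|\le\delta|I|$ and a vector $y\in S^{n-1}\cap\mathrm{span}(X_j)_{j\in J}$ such that $|\langle X_i,y\rangle|$ is large for all $i\in I$. Then one conditions on $(X_j)_{j\in J}$, which fixes the net $\NN_J$ and the vector $y$, and applies the moment bound and independence to $(X_i)_{i\in I}$ legitimately. Extracting such disjoint $I,J$ from your set $F$ is where the almost-orthogonality of the $X_i$ (Lemma~\ref{almost orthogonality}) enters; without that ingredient the argument does not close.
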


For most part of our argument (through decoupling), we treat $X_i$ as fixed non-random vectors.
The only property we require from $X_i$ is that they are almost pairwise orthogonal.
For random vectors, an almost pairwise orthogonality easily follows 
from the moment assumptions \eqref{moment assumptions}:

\begin{lemma}[\cite{V} Lemma~3.3]							\label{almost orthogonality}
  Consider random vectors $X_1,\ldots,X_N$ which satisfy moment assumptions
  \eqref{moment assumptions} for some $q > 4$ and some $K$, $L$.
  Then, for every $t \ge 1$,  with probability at least $1 - C t^{-q}$ one has
  \begin{equation}							\label{cross terms OK}
    \frac{1}{|E|} \sum_{i \in E,\; i \ne k} \< X_i, X_k \> ^2
    \lesssim_{q,K,L} t^2 \Big( \frac{N}{|E|} \Big)^{4/q} n
    \quad \text{for all } E \subseteq [N], \; k \in [N].
  \end{equation}
\end{lemma}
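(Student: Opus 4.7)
The plan is to reduce the uniform-in-$E$ statement to a bound on the order statistics of $Y_i := \langle X_i, X_k\rangle^2$ for each fixed $k \in [N]$, combining Markov's inequality with a combinatorial union bound. Fix $k$ and condition on $X_k$. The moment hypothesis applied to the unit vector $X_k/\|X_k\|_2$, together with the a.s.\ bound $\|X_k\|_2 \le K\sqrt{n}$, yields
$$\E\bigl[|\langle X_i, X_k\rangle|^q \,\big|\, X_k\bigr] \le L^q \|X_k\|^q \le L^q K^q n^{q/2}$$
for every $i \ne k$. Since the variables $Y_i$, $i \ne k$, are conditionally independent given $X_k$, Markov gives $\P(Y_i \ge s \mid X_k) \le L^q K^q n^{q/2}/s^{q/2}$ for every $s > 0$.

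Let $Y_1^* \ge Y_2^* \ge \cdots$ denote the non-increasing rearrangement of $\{Y_i : i \ne k\}$. The event $\{Y_m^* \ge s\}$ says that at least $m$ of the $Y_i$ exceed $s$, so the combinatorial union bound over $m$-subsets of $[N]\setminus\{k\}$ yields
$$\P(Y_m^* \ge s \mid X_k) \le \binom{N-1}{m}\Bigl(\frac{L^q K^q n^{q/2}}{s^{q/2}}\Bigr)^m \le \Bigl(\frac{eN L^q K^q n^{q/2}}{m\, s^{q/2}}\Bigr)^m.$$
I would then choose the threshold $s_m := C_q\, t^2 L^2 K^2 n (N/m)^{4/q}$ with $C_q$ a sufficiently large constant depending on $q$. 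A direct computation gives $s_m^{q/2} = C_q^{q/2} t^q L^q K^q n^{q/2} (N/m)^2$, so the tail bound collapses to
$$\P(Y_m^* \ge s_m \mid X_k) \le \Bigl(\frac{em}{C_q^{q/2}\, t^q N}\Bigr)^m.$$
Taking $C_q$ so that $C_q^{q/2} \ge 2e^2$, the successive ratios $r_{m+1}/r_m \le e^2(m+1)/(C_q^{q/2} t^q N)$ are at most $1/2$ for every $m \le N-1$; hence the sum over $m$ is dominated by the $m=1$ term, giving $\sum_{m=1}^{N-1}\P(Y_m^* \ge s_m \mid X_k) \lesssim_q t^{-q}/N$.

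Taking a union bound over $k \in [N]$, with probability at least $1 - C\, t^{-q}$ the inequalities $Y_m^* \le s_m$ hold simultaneously for every $k \in [N]$ and every $m \in [1, N-1]$. On this event, for any $E \subseteq [N]$ and any $k$,
$$\sum_{i \in E,\, i \ne k} \langle X_i, X_k\rangle^2 \le \sum_{m=1}^{|E|} Y_m^* \lesssim_q t^2 L^2 K^2 n \cdot N^{4/q}\sum_{m=1}^{|E|} m^{-4/q} \lesssim_q t^2 L^2 K^2 n \cdot |E| \cdot (N/|E|)^{4/q},$$
where the last step uses $q > 4$ so that $\sum_{m=1}^M m^{-4/q} \lesssim_q M^{1-4/q}$. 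Dividing through by $|E|$ and absorbing $L$ and $K$ into the implicit constant gives the claim.

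The key subtlety is calibrating $s_m$ so that the per-$k$ failure probability decays like $1/(t^q N)$ rather than the naive $1/t^q$ — otherwise the union bound over $k$ would cost a factor of $N$. This is achieved precisely by exploiting the full combinatorial gain $\binom{N-1}{m}^{-1} \sim (m/N)^m$ against the $m$-fold tail $(LKn^{1/2}/s_m^{1/2})^{mq/q} \sim (m/N)^{2m}$, yielding the crucial extra factor $m/(t^q N)$ inside the $m$-th power. The assumption $q > 4$ appears twice: to make the sum $\sum m^{-4/q}$ converge at rate $M^{1-4/q}$, and implicitly to give room for the Markov inequality at exponent $q/2$.
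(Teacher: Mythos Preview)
Your argument is correct. The key steps---conditioning on $X_k$ to reduce to i.i.d.\ tail bounds via the moment hypothesis, controlling the order statistics $Y_m^*$ by a union bound over $m$-subsets, calibrating $s_m \sim t^2 K^2 L^2 n (N/m)^{4/q}$ so that the per-$k$ failure probability is $O(t^{-q}/N)$, and then summing $\sum_{m \le |E|} s_m$ using $4/q < 1$---all go through as written.

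Note, however, that the paper does not actually prove this lemma: it is quoted verbatim as Lemma~3.3 of \cite{V}, so there is no in-paper proof to compare against. Your argument is the natural one and is essentially what the cited reference does (order-statistics control via Markov plus a union bound over subsets). One small quibble with your closing commentary: the second appearance of $q>4$ you mention is not really needed for the Markov step---any finite $q$th moment would do there. The hypothesis $q>4$ enters only once, in the summation $\sum_{m=1}^{|E|} m^{-4/q} \lesssim_q |E|^{1-4/q}$, which is exactly where it is required to get the clean exponent $4/q$ in the conclusion.
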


\section{Structure of divergent series}				\label{s: structure}
%-----------------------------------------------

In this section we study the structure of series which diverge slower than the logarithmic
function but faster than an iterated logarithm. This is summarized in the following result.

\begin{proposition}[Structure of divergent series]						\label{structure}
  Let $\a \in (0,1)$. 
  Consider a vector $b = (b_1,\ldots,b_m) \in \R^m$ ($m \ge 4$) that satisfies
  \begin{equation}										\label{divergence}
  \|b\|_{1,\infty} \le 1, \quad 
  \|b\|_1 \gtrsim_{\a} (\log \log m)^2.
  \end{equation}
  Then there exist a positive integer $l \le \log m$ and a subset of indices $I_1 \subseteq [m]$
  such that the following holds. 
  Given a vector $\lambda = (\lambda_i)_{i \in I_1}$ such that  $\|\lambda\|_1 \le 1$, 
  one can find a further subset $I_2 \subseteq I_1$ with the following two properties.
  
  (i) (Regularity): 
  the sizes $n_1 := |I_1|$ and $n_2 := |I_2|$ satisfy 
  $$
  2^{l/2} \le \frac{m}{n_1} \le \frac{m}{n_2} \le \Big( \frac{m}{n_1} \Big)^{1+\a}.
  $$
  
  (ii) (Largeness of coefficients):
  \begin{align*}
  &|b_i| \ge \frac{1}{l n_1} \text{ for $i \in I_1$}; \\
  &|b_i| \ge \frac{1}{l n_2} \text{ and } |b_i| \ge 2 |\lambda_i| \text{ for $i \in I_2$}.
  \end{align*}
  
Furthermore, we can make $l \ge C_\a \log \log m$ with arbitrarily large $C_\a$
by making the dependence on $\a$ implicit in the assumption \eqref{divergence} sufficiently large.
\end{proposition}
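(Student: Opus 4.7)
My plan is to prove the proposition in two conceptual stages. Stage 1 produces the pair $(l, I_1)$ by a dyadic pigeonhole on the rearranged coefficients of $b$; this uses roughly $\log\log m$ out of the $(\log\log m)^2$ mass budget. Stage 2, given $\lambda$, refines $I_1$ to $I_2 \subseteq I_1$ so as to incorporate the domination $b_i \ge 2|\lambda_i|$, and spends the second factor of $\log\log m$.

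\textbf{Stage 1.} Replace $b_i$ with $|b_i|$ and let $b^*_1 \ge b^*_2 \ge \cdots$ be the decreasing rearrangement, so $b^*_j \le 1/j$ by the weak-$\ell_1$ bound. I look for a pair $(n_1, l)$ with $l \in [C_\alpha \log\log m,\, \log m]$, $n_1 \le m/2^{l/2}$, and $n_1 b^*_{n_1} \ge 1/l$ -- this gives the required coefficient bound $b^*_{n_1} \ge 1/(l n_1)$. Taking $l := 2\log(m/n_1)$ (the largest value allowed by the regularity) reduces the question to finding $n_1$ with $n_1 b^*_{n_1} \ge 1/(2\log(m/n_1))$. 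If no such $n_1$ existed in the admissible range $[\sqrt m,\, m/(\log m)^{C_\alpha/2}]$, the substitution $u = \log(m/n_1)$ converts the assumed upper bound $b^*_{n_1} < 1/(2 n_1 \log(m/n_1))$ into $\sum b^*_{n_1} \lesssim \log\log m$ over that range, and analogous (easier) checks handle the complementary ranges -- $n_1 < \sqrt m$ with $l = \log m$, and $n_1 > m/(\log m)^{C_\alpha/2}$ trivially via $\sum b^*_{n_1} \le \log(m/n_1) \lesssim C_\alpha \log\log m$. The combined bound contradicts $\|b\|_1 \gtrsim_\alpha (\log\log m)^2$ once the implicit constant is large. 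With $(n_1, l)$ in hand, I define $I_1 := \{\pi(1), \ldots, \pi(n_1)\}$ via the sort permutation $\pi$; the regularity $2^{l/2} \le m/n_1$ and the bound $b_i \ge 1/(l n_1)$ on $I_1$ are then immediate.

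\textbf{Stage 2.} Given $\lambda$ on $I_1$ with $\|\lambda\|_1 \le 1$, set $n_2 := \lceil n_1 \cdot 2^{-\alpha l/2} \rceil$, which satisfies $m/n_2 \le (m/n_1)^{1+\alpha}$. Let $A := \{i \in I_1 : b_i \ge 1/(l n_2)\}$ and $I_2 := \{i \in A : b_i \ge 2|\lambda_i|\}$; I must show $|I_2| \ge n_2$. A direct Markov bound gives $|A \setminus I_2| \le 2 l n_2$, while weak-$\ell_1$ caps $|A|$ at $l n_2$, so subtracting is not enough. Instead, I would strengthen Stage 1 quantitatively to show that the $b$-mass on $A$, not just its cardinality, is significant: using the full $(\log\log m)^2$ mass, one forces several dyadic blocks of positions in the window of log-scales between $\log n_2$ and $\log n_1$ to each carry mass $\gtrsim 1/l$. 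This yields $\sum_{i \in A} b_i$ comparable to or larger than $2 \|\lambda\|_1 = 2$, and then the weighted Markov estimate $\sum_{i \in A \setminus I_2} b_i < 2 \|\lambda\|_1$ leaves $\sum_{i \in I_2} b_i$ comparable to $\sum_{i \in A} b_i$. Since $b_i \le 1$, this translates into a count $|I_2| \ge n_2$.

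\textbf{Main obstacle.} The hardest part will be making the counting in Stage 2 tight. Both the weak-$\ell_1$ ceiling on $|A|$ and the Markov floor on $|A \setminus I_2|$ are of order $l n_2$, so the proof must exploit that $b_i$ on $A$ is typically much larger than the threshold $1/(l n_2)$. The ``extra'' $\log\log m$ in the mass hypothesis is precisely what should guarantee this distributional property: it forces a range of heavy dyadic blocks near the target scale $n_2$, rather than just one as in Stage 1. Formulating and executing this refined pigeonhole -- with constants that close the loop and respect the flexibility of the parameter $\alpha$ -- is the core technical challenge.
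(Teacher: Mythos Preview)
Your Stage~2 contains a genuine gap that your ``main obstacle'' paragraph correctly diagnoses but does not close. The weighted Markov estimate $\sum_{i \in A \setminus I_2} b_i \le 2\|\lambda\|_1 \le 2$ is fine, but converting $\sum_{i \in I_2} b_i \gtrsim 1$ into the count $|I_2| \ge n_2$ requires an \emph{upper} bound on the $b_i$'s in $I_2$ of order $1/n_2$. You only have $b_i \le 1$ from the weak-$\ell_1$ assumption, and the large coefficients (those with $b_i \gg 1/n_2$) can soak up all the mass: indeed $\sum_{j \le n_2} b^*_j$ can be as large as $\log n_2$, so peeling off the top of $A$ does not help. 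A related problem appears one step earlier: your Stage~1 only guarantees $b^*_{n_1} \ge 1/(l n_1)$, whereas the definition of $A$ already needs the stronger bound $b^*_{n_2} \ge 1/(l n_2)$ at the smaller scale $n_2 = n_1 2^{-\alpha l/2}$, which is not implied.

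The missing structural idea is to build $I_1$ not as a top-$n_1$ set but as a union of \emph{dyadic level sets} $\Omega_j = \{i : 2^{-j} < b_i \le 2^{-j+1}\}$. Within any single $\Omega_j$ the coefficients are equal up to a factor~$2$, so mass and cardinality are interchangeable there. The divergence hypothesis forces many blocks (at least $l \gtrsim_\alpha \log\log m$) to have contribution $B_j := \sum_{i \in \Omega_j} b_i \gtrsim 1/l$; a short regularization step selects a range of such block-indices $J \subset [j'', j']$ with $|J| \gtrsim l/\log\log m$ and with $j'$, $j''$ related as the regularity condition requires, and one sets $I_1 = \bigcup_{j \in J} \Omega_j$. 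Now, given $\lambda$, pigeonhole over the $|J|$ blocks (not over individual indices): some block $\Omega_{j_0}$ carries $\lambda$-mass at most $1/|J| \lesssim (\log\log m)/l$, and Markov's inequality \emph{inside that block} yields a subset $I_2 \subseteq \Omega_{j_0}$ with $|I_2| \ge \tfrac12 |\Omega_{j_0}|$ and $\lambda_i \lesssim 1/(l\,|\Omega_{j_0}|) \le b_i/2$. Because all $b_i$ in $\Omega_{j_0}$ are comparable, the largeness condition $b_i \ge 1/(l n_2)$ is automatic. This is exactly where the second $\log\log m$ factor is spent---to make $|J|$ large enough for the block-level pigeonhole---and it sidesteps the mass-to-count conversion that blocks your argument.
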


\begin{remarks}
  1. Proposition~\ref{structure} is somewhat nontrivial even if one ignores the vector $\lambda$ 
  and the further subset $I_2$.
  In this simpler form the result was introduced informally in Section~\ref{s: structure intro}.
  The structure that we find is located in the coefficients $b_i$ on the index set $I_1$. 
  Note that the largeness condition (ii) for these coefficients is easy to prove if we disregard the regularity 
  condition (i). 
  Indeed, since $\|b\|_{1,\infty} \gtrsim (\log m)^{-1} \|b\|_1 \gg 1/\log m$, 
  we can choose $l = \log m$ and obtain a set $I_1$ satisfying (ii) by the definition of the weak $\ell_2$ norm.
  But the regularity condition (i) guarantees the smaller level 
  $$
  l \lesssim \log (m/n_1)
  $$
  which will be crucial in our application to decoupling.
  
  2. The freedom to choose $\lambda$ in Proposition~\ref{structure}
  ensures that the structure located in the set $I_1$
  is in a sense hereditary; it can pass to subsets $I_2$. The domination of $\lambda$
  by $b$ on $I_2$ will be crucial in the removal of the diagonal terms in our application 
  to decoupling.
\end{remarks}

We now turn to the proof of Proposition~\ref{structure}.
Heuristically, we will first find many (namely, $l$) 
sets $I_1$ on which the coefficients are large as in (ii),
then choose one that satisfies the regularity condition (i).
This regularization step will rely on the following elementary lemma.

\begin{lemma}[Regularization]				\label{regularization}
  Let $N$ be a positive integer. 
  Consider a nonempty subset $J \subset [L]$ with size $l := |J|$.
  Then, for every $\a \in (0,1)$,
  there exist elements $j_1, j_2 \in J$ that satisfy the following two properties.
  
  (i) (Regularity): 
  $$
  l/2 \le j_1 \le j_2 \le (1+\a) j_1.
  $$
  
  (ii) (Density):
  $$
  |J \cap [j_1, j_2]| \gtrsim_\a \frac{l}{\log(2L/l)}.
  $$
\end{lemma}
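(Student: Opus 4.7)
The plan is a short pigeonhole argument on a geometric (dyadic-in-base-$1+\a$) cover of the range $[l/2,L]$. Note first that the parameter $N$ in the statement plays no role; the content of the lemma is about the subset $J\subset[L]$.

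First, I would trim off the smallest elements. Since $|[1,l/2-1]|<l/2$, the set $J':=J\cap [l/2,L]$ has $|J'|\ge l/2$. This has two benefits: it enforces the lower bound $j_1\ge l/2$ required by (i), and it costs only a factor of $2$ in the density count for (ii).

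Next, I would cover $[l/2,L]$ by the geometric intervals $I_k:=[a_k,a_{k+1}]$ with $a_k:=(l/2)(1+\a)^k$, for $k=0,1,\ldots,K-1$, choosing $K:=\lceil \log_{1+\a}(2L/l)\rceil$ so that $a_K\ge L$. Note that $K\lesssim_\a \log(2L/l)$, using $\log(1+\a)\asymp_\a 1$. By pigeonhole, one of these intervals, say $I_k=[a,(1+\a)a]$, contains
\[
|J'\cap I_k|\;\ge\;\frac{|J'|}{K}\;\gtrsim_\a\;\frac{l}{\log(2L/l)}
\]
elements of $J'$. Now set
\[
j_1:=\min(J'\cap I_k),\qquad j_2:=\max(J'\cap I_k).
\]
Then $l/2\le a\le j_1\le j_2\le(1+\a)a\le(1+\a)j_1$, which is (i). Moreover every element of $J'\cap I_k$ lies in $[j_1,j_2]$ by definition of $j_1,j_2$, so $|J\cap[j_1,j_2]|\ge|J'\cap I_k|\gtrsim_\a l/\log(2L/l)$, which is (ii).

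There is no real obstacle here; the only mild point is ensuring that the endpoints $j_1,j_2$ of the final interval themselves belong to $J$ (which is why one takes the min and max of $J'\cap I_k$ rather than the interval endpoints $a,(1+\a)a$), and that restricting from $J$ to $J'$ costs only a constant factor. All dependence on $\a$ is absorbed into the implicit constant via $K\lesssim_\a\log(2L/l)$.
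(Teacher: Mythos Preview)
Your proof is correct and follows essentially the same approach as the paper: cover the relevant range by $\lesssim_\a \log(2L/l)$ geometric intervals of ratio $1+\a$, apply pigeonhole, and take the min and max of $J$ in the winning interval as $j_1,j_2$. The only cosmetic difference is that the paper starts its geometric progression at the median element $j^{(0)}\in J$ (which automatically satisfies $j^{(0)}\ge l/2$), whereas you first trim $J$ to $J'=J\cap[l/2,L]$ and start the progression at $l/2$; both devices serve exactly the same purpose of enforcing $j_1\ge l/2$ while keeping at least $l/2$ points for the pigeonhole count.
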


\begin{proof}
We will find $j_1$, $j_2$ as some consecutive terms of the following geometric progression. 
Define $j^{(0)} \in J$ to be the (unique) element such that 
$$
|J \cap [1, j^{(0)}]| = \lceil l/2 \rceil, \quad
\text{and let } j^{(k)} := (1+\a) j^{(k-1)}, 
\quad k=1,2,\ldots.
$$
We will only need to consider $K$ terms of this progression, where 
$K := \min \{k :\; j^{(k)} \ge L \}$.
Since $j^{(0)} \ge \lceil l/2 \rceil \ge l/2$, we have 
$j^{(k)} \ge (1+\a)^k j^{(0)} \ge (1+\a)^k l/2$. On the other hand, $j^{(K-1)} \le L$. 
It follows that $K \lesssim_\a \log(2L/l)$.

We claim that there exists a term $1 \le k \le K$ such that 
\begin{equation}									\label{jk-1 jk}
  |J \cap [j^{(k-1)}, j^{(k)}]| \ge \frac{l}{3K} \gtrsim_\a  \frac{l}{\log(2L/l)}.
\end{equation}
Indeed, otherwise we would have 
$$
l = |J| \le |J \cap [1, j^{(0)}]| + \sum_{k=1}^K |J \cap [j^{(k-1)}, j^{(k)}]|
\le \Big\lceil \frac{l}{2} \Big\rceil + K \cdot \frac{l}{3K}
< l,
$$
which is impossible.

The terms $j_1 := j^{(k-1)}$ and $j_2 := j^{(k)}$ for which \eqref{jk-1 jk} holds
clearly satisfy (i) and (ii) of the conclusion. By increasing $j_1$
and decreasing $j_2$ if necessary we can assume that $j_1, j_2 \in J$. 
This completes the proof.
\end{proof}

\medskip

\begin{proof}[Proof of Proposition~\ref{structure}]
We shall prove the following slightly stronger statement. 
Consider a sufficiently large number 
$$
K \gtrsim_\a \log \log m.
$$
Assume that the vector $b$ satisfies 
$$
\|b\|_{1,\infty} \le 1, \quad 
\|b\|_1 \gtrsim_{\a} K \log \log m.
$$
We shall prove that the conclusion of the Proposition holds 
with (ii) replaced by:
  \begin{align}
    &|b_i| \ge \frac{K}{2 l n_1} \text{ for $i \in I_1$}; 					\label{bi on I1}\\
    &|b_i| \ge \frac{K}{2 l n_2} \ge 2 |\lambda_i| \text{ for $i \in I_2$}	\label{bi on I2}.
  \end{align}

We will construct $I_1$ and $I_2$ in the following way. First we decompose the index set $[m]$ 
into blocks $\Omega_1, \ldots, \Omega_L$ on which the coefficients $b_i$ have similar magnitude; 
this is possible with $L \sim \log m$ blocks. Using the assumption $\|b\|_1 \gtrsim (\log \log m)^2$, 
one easily checks that many (at least $l \sim \log \log m$) of the blocks $\Omega_j$ have large contribution 
(at least $1/j$) to the sum $\|b\|_1 = \sum_i |b_i|$. We will only focus on such large blocks in the rest of the argument.
At this point, the union of these blocks could be declared $I_1$. We indeed proceed this way, except we first
use Regularization Lemma~\ref{regularization} on these blocks in order to obtain the required regularity property (ii).
Finally, assume we are given coefficients $(\lambda_i)_{i \in I_1}$ with small sum $\sum |\lambda_i| \le 1$
as in the assumption. Since the coefficients $b_i$ are large on $I_1$ by construction, the pigeonhole principle
will yield (loosely speaking) a whole block of coefficients $\Omega_j$ where $b_i$ will dominate as required,
$|b_i| \ge 2 |\lambda_i|$. We declare this block $I_2$ and complete the proof. 
Now we pass to the details of the argument.

\smallskip
{\bf Step 1: decomposition of $[m]$ into blocks.}
Without loss of generality,
$$
\frac{1}{m} < b_i \le 1, \quad
\lambda_i \ge 0 \quad
\text{for all } i \in [m].
$$
Indeed, we can clearly assume that $b_i \ge 0$ and $\lambda_i \ge 0$. 
The estimate $b_i \le 1$ follows from the assumption: $\|b\|_\infty \le \|b\|_{1,\infty} \le 1$.
Furthermore, the contribution of the small coefficients $b_i \le 1/m$ to the norm 
$\|b\|_1$ is at most $1$, while by the assumption $\|b\|_1 \gtrsim_\a K \log \log m \ge 2$. 
Hence we can ignore these small coefficients by replacing $[m]$ with the subset 
corresponding to the coefficients $b_i \ge 1/m$.

We decompose $[m]$ into disjoint subsets (which we call blocks) according to the magnitude of $b_i$, 
and we consider the contribution of each block $\Omega_j$ to the norm $\|b\|_1$:
$$
\Omega_j := \big\{ i \in [m] :\; 2^{-j} < b_i \le 2^{-j+1} \big\}; 
\quad m_j := |\Omega_j|;
\quad B_j := \sum_{i \in \Omega_j} b_i.
$$ 
By our assumptions on $b$, there are at most $\log m$ nonempty blocks $\Omega_j$.
As $\|b\|_{1,\infty} \le 1$, Markov's inequality yields for all $j$ that
\begin{gather}								
  m_j \le \sum_{k \le j} m_k = \big| \big\{ i \in [m] :\; b_i > 2^{-j} \big\} \big| \le 2^j; 	 \label{size mj}\\							
  B_j \le m_j 2^{-j+1} \le 2.  											 \label{Bj small}
\end{gather}

Only the blocks with large contributions $B_j$ will be of interest to us. Their number is
$$
l := \max \big\{ j \in [\log m] :\; B^*_j \ge K/j \big\};
$$
and we let $l=0$ if it happens that all $B_j < K/j$.
We claim that there are many such blocks:
\begin{equation}													\label{l large}
  \frac{1}{5} K \log \log m \le l \le \log m.
\end{equation}
Indeed, by the assumption and using \eqref{Bj small} we can bound
$$
K \log \log m \le \|b\|_1 = \sum_{j=1}^{\log m} B^*_j
\le 2l + 0.6 K \log \log m,
$$
which yields \eqref{l large}.

\smallskip
{\bf Step 2: construction of the set $I_1$.}
As we said before, we are only interested in blocks $\Omega_j$ with large contributions $B_j$. 
We collect the indices of such blocks into the set 
$$
\bar{J} := \big\{ j \in [\log m] :\; B_j \ge K/l \big\}.
$$
Since the definition of $l$ implies that $B^*_l \ge K/l$, we have $|\bar{J}| \ge l$.
Then we can apply Regularization Lemma~\ref{regularization} to the set
$\{ \log m - j :\; j \in \bar{J} \} \subseteq [\log m]$. 
Thus we find two elements $j', j'' \in \bar{J}$ satisfying 
\begin{equation}																				\label{j primes}
  l/2 \le \log m - j' \le \log m - j'' \le (1+\a/2) (\log m - j'),
\end{equation}
and such that the set 
$$
J := \bar{J} \cap [j'', j']
$$
has size $|J| \gtrsim_\a l / \log \log m$. 
Since by our choice of $K$ we can assume that $K \ge 8 \log \log m$,
we obtain
\begin{equation}																				\label{J large}
  |J| \ge \frac{8 l}{K}.
\end{equation}
We are going to show that the set  
$$
I_1 := \bigcup_{j \in J} \Omega_j
$$
satisfies the conclusion of the Proposition.

\smallskip
{\bf Step 3: sizes of the coefficients $b_i$ for $i \in I_1$.}
Let us fix $j \in J \subseteq \bar{J}$.
From the definition of $\bar{J}$ we know that the contribution $B_j$ 
is large: $B_j \ge K/l$. 
One consequence of this is a good estimate of the size $m_j$ of the block $\Omega_j$. 
Indeed, the above bound together with \eqref{Bj small} this implies
\begin{equation}																					\label{mj exp}
  \frac{K}{2 l} 2^j \le m_j \le 2^j			\quad \text{ for } j \in J.
\end{equation}
Another consequence of the lower bound on $B_j$ is the required lower bound
on the individual coefficients $b_i$. Indeed, by construction of $\Omega_j$
the coefficients $b_i$, $i \in \Omega_j$ are within the factor $2$ from each other. 
It follows that
\begin{equation}																					\label{bi large on blocks}
  b_i \ge \frac{1}{2|\Omega_j|} \sum_{i \in \Omega_j} b_i
  = \frac{B_j}{2m_j}
  \ge \frac{K}{2 l m_j}
  \quad \text{for } i \in \Omega_j, \; j \in J.
\end{equation}
In particuar, since by construciton $\Omega_j \subseteq I_1$, we have $m_j \le |I_1|$, 
which implies
$$
b_i \ge \frac{K}{2 l |I_1|} \quad \text{for } i \in I_1.
$$ 
We have thus proved the required lower bound \eqref{bi on I1}.

\smallskip
{\bf Step 4: Construction of the set $I_2$, and sizes of 
  the coefficients $b_i$ for $i \in I_2$.}
Now suppose we are given a vector $\lambda = (\lambda_i)_{i \in I_1}$ with $\|\lambda\|_1 \le 1$.
We will have to construct a subset $I_2 \subset I_1$ as in the conclusion,
and we will do this as follows. 
Consider the contribution of the block $\Omega_j$ to the norm $\|\lambda\|_1$:
$$
L_j := \sum_{i \in \Omega_j} \lambda_j, \quad j \in J.
$$
On the one hand, the sum of all contributions is bounded as 
$\sum_{j \in J} L_j = \|\lambda\|_1 \le 1$.
On the other hand, there are many terms in this sum: $|J| \ge 8l/K$
as we know from \eqref{J large}.
Therefore, by the pigeonhole principle some of the contributions must be small: 
there exists $j_0 \in J$ such that 
$$
L_{j_0} \le K/8l.
$$

This in turn implies via Markov's inequality that most of the coefficients 
$\lambda_i$ for $i \in \Omega_{j_0}$ are small, and we shall declare these set of indices $I_2$. 
Specifically, since 
$L_{j_0} = \sum_{i \in \Omega_{j_0}} \lambda_j \le K/8l$ and $|\Omega_{j_0}| = m_{j_0}$, 
using Markov's inequality we see that the set
$$
I_2 := \big\{ i \in \Omega_{j_0} :\; \lambda_i \le \frac{K}{4 l m_{j_0}} \big\}
$$
has cardinality 
\begin{equation}																\label{I2 mj0}
  \frac{1}{2} m_{j_0} \le |I_2| \le m_{j_0}.
\end{equation}
Moreover, using \eqref{bi large on blocks}, we obtain
$$
b_i \ge \frac{K}{2 l m_{j_0}} \ge 2 \lambda_i
\quad \text{for } i \in I_2.
$$
We have thus proved the required lower bound \eqref{bi on I2}.

\smallskip
{\bf Step 5: the sizes of the sets $I_1$ and $I_2$.}
It remains to check the regularity property (i) of the conclusion of the Proposition.
We bound
\begin{align*}
|I_1| 
  &= \sum_{j \in J} m_j	 \quad \text{(by definition of $I_1$)} \\
  &\le \sum_{j \le j'} m_j  \quad \text{(by definition of $J$)} \\
  &\le 2^j.  \quad \text{(by \eqref{size mj})}
\end{align*}
Therefore, using \eqref{j primes} we conclude that 
\begin{equation}																\label{mj over I1}
  \frac{m}{|I_1|} \ge 2^{\log m - j'} \ge 2^{l/2}.
\end{equation}
We have thus proved the first inequality in (i) of the conclusion of the Proposition.
Similarly, we bound
\begin{align*}
|I_2|
  &\ge \frac{1}{2} m_{j_0}				\quad \text{(by \eqref{I2 mj0})} \\
  &\ge \frac{K}{4 l} 2^{j_0}			\quad \text{(by \eqref{mj exp}, and since $j_0 \in J$)} \\
  &\ge \frac{K}{4 l} 2^{j''}.			\quad \text{(by definition of $J$, and since $j_0 \in J$)} \\  
\end{align*}
Therefore
\begin{align*}
\frac{m}{|I_2|} 
  &\le \frac{4 l}{K} 2^{\log m - j''}  \\ 
  &\le \frac{8}{K} (\log m -j') 2^{(1+\a/2)(\log m - j')}		\quad \text{(by \eqref{j primes})}\\
  &\le \frac{\a}{2} (\log m -j') 2^{(1+\a/2)(\log m - j')}	\quad \text{(by the assumption on $K$)}\\
  &\le 2^{(1+\a)(\log m - j')}\\
  &\le \Big( \frac{m}{|I_1|} \Big)^{1+\a}.										\quad \text{(by \eqref{mj over I1})}
\end{align*}
This completes the proof of (i) of the conclusion, and of the whole Proposition~\ref{structure}.
\end{proof}

\section{Decoupling}					\label{s: decoupling}
%-----------------------------------------------

In this section we develop a decoupling principle, which was informally introduced in 
Section~\ref{s: decoupling intro}. In contrast to other decoupling results used in probabilistic contexts, 
our decoupling principle is non-random. It is valid for 
arbitrary fixed vectors $X_i$ which are almost pairwise orthogonal as in \eqref{cross terms OK}.
An example of such vectors are random vectors, as we observed earlier 
in Lemma~\ref{almost orthogonality}. Thus in this section we will consider vectors 
$X_1,\ldots,X_m \in \R^n$
that satisfy the following almost pairwise orthogonality assumptions for some 
$r' \ge 1, K_1, K_2$:

\begin{equation}														\label{weak orthonormality}
\begin{split}
  &\|X_i\|_2 \le K_1 \sqrt{n}; \\
  &\frac{1}{|E|} \sum_{i \in E,\; i \ne k} \< X_i, X_k \> ^2
    \le K_2^4 \Big( \frac{N}{|E|} \Big)^{1/r'} n
    \quad \text{for all } E \subseteq [m], \; k \in [m].
\end{split}
\end{equation}

In the earlier work \cite{V} we developed a weaker decoupling principle,
which was valid for the weak $\ell_2$ norm instead of $\ell_2$ norm. Let us recall this result first. 
Assume that for vectors $X_i$ satisfying \eqref{weak orthonormality} with $r'=r$ one has
$$
\sup_{x \in S^{n-1}} \|( \< X_i, x\> )_{i=1}^m\|_{2,\infty}^2 
\gtrsim_{r,K_1,K_2} n + \Big( \frac{N}{m} \Big)^{1/r} m.
$$
Then the Decoupling Proposition~2.1 of \cite{V} implies that 
there exist disjoint sets of indices $I, J \subseteq [m]$ such that $|J| \le \d |I|$, 
and there exists a vector $y \in S^{n-1} \cap \Span(X_j)_{j \in J}$, such that 
$$
\< X_i, y\> ^2 \ge \Big( \frac{N}{|I|} \Big)^{1/r}
\quad \text{for } i \in I.
$$
Results of this type are best suited for applications to random independent vectors $X_i$. 
Indeed, the events that $\< X_i, y\> ^2$ is large are independent for $i \in I$
because $y$ does not depend on $(X_i)_{i \in I}$.
The probability of each such event is easy to bound
using the moment assumptions \eqref{moment assumptions}.

In our new decoupling principle, we replace the weak $\ell_2$ norm by 
the $\ell_2$ norm at the cost of an iterated logarithmic factor and 
a slight loss of the exponent. Our result will thus operate in the regime where
the weak $\ell_2$ norm is small while $\ell_2$ norm is large. 
We summarize this in the following proposition.

\begin{proposition}[Decoupling]										\label{decoupling}
  Let $n \ge 1$ and $4 \le m \le N$ be integers, 
  and let $1 \le r < \min(r', r'')$ and $\d \in (0,1)$.
  Consider vectors $X_1,\ldots, X_m \in \R^n$ which satisfy the 
  weak orthonormality conditions \eqref{weak orthonormality} for some $K_1, K_2$.
  Assume that for some $K_3 \ge \max(K_1,K_2)$ one has
  \begin{gather}	
  \sup_{x \in S^{n-1}} \|( \< X_i, x\> )_{i=1}^m\|_{2,\infty}^2 
    \le K_3^2 \Big[ n + \Big( \frac{N}{m} \Big)^{1/r} m \Big], 					\label{decoupling 2 infty}\\
  \Big\| \sum_{i=1}^m X_i \otimes X_i \Big\| =
  \sup_{x \in S^{n-1}} \sum_{i=1}^m \< X_i, x\> ^2
    \gtrsim_{r,r',r'',\d} K_3^2 (\log \log m)^2 \Big[ n + \Big( \frac{N}{m} \Big)^{1/r} m \Big].	\label{decoupling assumption}
  \end{gather}
  Then there exist nonempty disjoint sets of indices $I, J \subseteq [m]$ such that $|J| \le \d |I|$, 
  and there exists a vector $y \in S^{n-1} \cap \Span(X_j)_{j \in J}$, such that 
  $$
  \< X_i, y\> ^2 \ge K_3^2 \Big( \frac{N}{|I|} \Big)^{1/r''}
  \quad \text{for } i \in I.
  $$
\end{proposition}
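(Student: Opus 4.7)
The plan is to combine the structure theorem (Proposition~\ref{structure}) with Maurey's empirical method, in three sequential moves: isolate a structured subset of indices on which $\<X_i, x\>$ is uniformly large, remove the self-interaction $c_i X_i$ when $x$ is expanded over $(X_j)_{j\in I_1}$, and then sparsify that expansion using randomized selection.

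\textbf{Step 1 (Structure).} Let $x\in S^{n-1}$ be a near-maximizer in \eqref{decoupling assumption}, and normalize $b_i := \<X_i,x\>^2/B$ with $B:=K_3^2[n+(N/m)^{1/r}m]$. Hypothesis \eqref{decoupling 2 infty} forces $\|b\|_{1,\infty}\le 1$, while \eqref{decoupling assumption} gives $\|b\|_1\gtrsim_{r,r',r'',\d}(\log\log m)^2$. Applying Proposition~\ref{structure} with a sufficiently small $\a=\a(r,r',r'',\d)$ produces a level $l\le\log m$ and a set $I_1\subseteq[m]$ of size $n_1$ with $m/n_1\ge 2^{l/2}$ on which $\<X_i,x\>^2\ge B/(ln_1)$; the further set $I_2$ will be delivered once the vector $\lambda$ in Proposition~\ref{structure} is chosen in Step~3. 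Then project $x$ onto $V:=\Span\{X_i:i\in I_1\}$; all the inner products $\<X_i,x\>$, $i\in I_1$, are preserved and $\|x\|_2$ does not increase, so I may write $x=\sum_{i\in I_1}c_iX_i$ with $\|x\|_2\le 1$.

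\textbf{Step 2 (Diagonal removal).} Invoke the domination clause of Proposition~\ref{structure} with $\lambda_i$ proportional to $c_i^2\|X_i\|_2^4/B$. The off-diagonal block of the Gram matrix of $(X_i)_{i\in I_1}$ has Frobenius norm controlled by \eqref{weak orthonormality}, and combining this with $\|x\|_2^2\le 1$ (and using $r<r'$ together with the regularity $m/n_1\ge 2^{l/2}$ to absorb the cross term) yields $\sum_{i\in I_1}c_i^2\|X_i\|_2^2\lesssim 1$. Since $\|X_i\|_2^2\le K_1^2 n$, this gives $\sum c_i^2\|X_i\|_2^4\lesssim K_1^2n\le B$, so after an absolute rescaling $\|\lambda\|_1\le 1$. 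Proposition~\ref{structure} then yields $I_2\subseteq I_1$ of size $n_2$, with $m/n_2\le(m/n_1)^{1+\a}$, on which $b_i\ge 2\lambda_i$; unraveling, $|c_i|\|X_i\|_2^2\le\tfrac12|\<X_i,x\>|$, so that
\[
\big|\<X_i,\,x-c_iX_i\>\big|\ \ge\ \tfrac12\,|\<X_i,x\>|\qquad(i\in I_2).
\]

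\textbf{Step 3 (Maurey sparsification).} Let $C:=\sum_{j\in I_1}|c_j|$ and sample a multiset $J\subseteq I_1$ of size $k=\lceil\d n_2\rceil$ i.i.d.\ from the distribution $p_j=|c_j|/C$, forming $y_0:=(C/k)\sum_{j\in J}\mathrm{sign}(c_j)X_j$, so that $\E y_0=x$. A second-moment computation based once more on \eqref{weak orthonormality} gives $\E\<X_i,y_0-x\>^2\lesssim C^2 K_2^4(N/n_1)^{1/r'}n/k$, which is $\ll\<X_i,x\>^2$ for all but a small fraction of $i\in I_2$ once $\a$ is chosen small enough in terms of $r,r',r''$. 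Pick a realization of $J$ for which at least half of the indices $i\in I_2\setminus J$ satisfy $|\<X_i,y_0\>|\gtrsim|\<X_i,x\>|$, declare $I$ to be that half, and set $y:=y_0/\|y_0\|_2\in S^{n-1}\cap\Span(X_j)_{j\in J}$. By construction $J\cap I=\emptyset$ and $|J|\le\d|I|$ (up to rescaling $\d$ by an absolute constant), and the geometric bound $\<X_i,x\>^2\ge B/(ln_2)$ combined with the regularity $m/n_2\le(m/n_1)^{1+\a}$ translates, by the routine exponent computation $(1/r)\log(N/m)+\log(m/n_2)/(1+\a)\ \ge\ (1/r'')\log(N/n_2)$, into the desired $\<X_i,y\>^2\ge K_3^2(N/|I|)^{1/r''}$.

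The main obstacle is Step~2: the validity of the diagonal removal rests on the quantitative norm estimate $\sum_{i\in I_1}c_i^2\|X_i\|_2^2\lesssim 1$, which must be squeezed out of the almost-orthogonality assumption \eqref{weak orthonormality} together with $\|x\|_2\le 1$, and this is precisely why Proposition~\ref{structure} had to be strengthened with the domination clause in the first place. A secondary difficulty is the exponent bookkeeping in Step~3: the strict inequality $r<\min(r',r'')$ is exactly what affords the slack needed to swallow both the Maurey variance error and the regularity loss coming from $\a>0$ into the final target exponent $1/r''$.
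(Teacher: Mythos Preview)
Your overall architecture matches the paper's, but Step~2 contains a genuine gap that breaks the argument, and the paper fills it with an idea you are missing.

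You project $x$ onto $\Span(X_i)_{i\in I_1}$ and write $x=\sum_{i\in I_1}c_iX_i$, then assert $\sum_{i\in I_1}c_i^2\|X_i\|_2^2\lesssim 1$. This estimate is what your choice of $\lambda_i$ (and hence the whole diagonal removal) rests on, and you yourself flag it as ``the main obstacle.'' But the weak orthogonality hypothesis \eqref{weak orthonormality} only controls row-wise $\ell_2$ norms of the off-diagonal Gram matrix $(\<X_i,X_k\>)_{i\ne k}$; it does not bound its operator norm relative to the diagonal entries. Expanding $\|x\|_2^2=\sum_i c_i^2\|X_i\|_2^2+\sum_{i\ne k}c_ic_k\<X_i,X_k\>$, the cross term can be large and negative, so $\sum_i c_i^2\|X_i\|_2^2$ can be arbitrarily larger than $1$. (When $n_1>n$ the $X_i$ are linearly dependent and the $c_i$ are not even determined by $x$.) The same lack of control reappears in Step~3: your Maurey variance bound carries a factor $C^2=(\sum_j|c_j|)^2$ for which you have no estimate.

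The paper avoids this entirely by a convexity (separation) argument rather than a projection. Since $\<X_i/a_i,x\>=1$ for all $i\in I_1$, the closed convex hull $K=\conv(X_i/a_i)_{i\in I_1}$ lies in the half-space $\{\<\cdot,x\>\ge 1\}$, which is disjoint from the open unit ball; separating yields a unit vector $\bar x\in\conv(K\cup\{0\})$ with $\<X_i/a_i,\bar x\>\ge 1$. Thus $\bar x=\sum_{i\in I_1}\lambda_i X_i/a_i$ with $\lambda_i\ge 0$ and $\sum_i\lambda_i\le 1$ \emph{for free}. These $\lambda_i$ are fed directly into Proposition~\ref{structure}; the domination clause $a_i^2\ge 2\lambda_i\bar n$ on $I_2$ then kills the diagonal term via $\lambda_k\|X_k\|_2^2/a_k^2\le n/(2\bar n)\le 1/2$, and Maurey's method is run with weights $\lambda_i$ (so the $\ell_1$ bound needed for the variance is again automatic). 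Your Step~3 exponent bookkeeping is essentially correct, but it cannot be reached without first replacing the projection by this separation step.
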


The proof of the Decoupling Proposition~\ref{decoupling} will use Proposition~\ref{structure}
in order to locate the structure of the large coefficients $\< X_i, x\> $. 
The following elementary lemma will be used in the argument.

\begin{lemma}										\label{order statistics}
  Consider a vector $\lambda = (\lambda_1,\ldots,\lambda_n) \in \R^n$ 
  which satisfies
  $$
  \|\lambda\|_1 \le 1, \quad \|\lambda\|_\infty \le 1/K
  $$
  for some integer $K$. Then, for every real numbers $(a_1,\ldots,a_n) \in \R^n$ one has
  $$
  \sum_{i=1}^n \lambda_i a_i \le \frac{1}{K} \sum_{i=1}^K a^*_i.
  $$
\end{lemma}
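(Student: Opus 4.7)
The plan is to prove this by a simple rearrangement/extremal argument. First I would reduce to the case where the $a_i$ are already sorted in non-increasing order, $a_1 \ge a_2 \ge \cdots \ge a_n$, so that $a^*_i = a_i$. This is legitimate because both constraints $\|\lambda\|_1 \le 1$ and $\|\lambda\|_\infty \le 1/K$ are invariant under coordinate permutations: given any $\lambda$ satisfying the hypotheses, pre-composing with the sorting permutation $\sigma$ yields another feasible $\lambda$ while turning $\sum_i \lambda_i a_i$ into $\sum_i \lambda'_i a^*_i$.

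The heart of the argument is to compare the two sides by reorganizing into a single telescoping identity:
$$\frac{1}{K}\sum_{i=1}^K a_i \;-\; \sum_{i=1}^n \lambda_i a_i \;=\; \sum_{i=1}^K\Big(\frac{1}{K} - \lambda_i\Big) a_i \;-\; \sum_{i=K+1}^n \lambda_i a_i.$$
By the hypothesis $\|\lambda\|_\infty \le 1/K$, each $\frac{1}{K} - \lambda_i$ on the first block is nonnegative (and $\lambda_i \ge 0$ on the second block in the setting where the lemma is applied in the paper, namely with $\lambda_i$ arising from absolute values of coefficients). Now I would use the monotonicity of the sorted $a_i$: for $i \le K$ we have $a_i \ge a_K$, while for $i > K$ we have $a_i \le a_K$. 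Assuming $a_K \ge 0$ (the lemma is applied to nonnegative quantities such as squared inner products), this yields
$$\sum_{i=1}^K\Big(\frac{1}{K}-\lambda_i\Big)a_i \;\ge\; a_K\Big(1-\sum_{i=1}^K\lambda_i\Big) \;\ge\; a_K\sum_{i>K}\lambda_i \;\ge\; \sum_{i=K+1}^n\lambda_i a_i,$$
where the middle inequality is exactly the constraint $\|\lambda\|_1 \le 1$. This shows the displayed difference is nonnegative, which is the claim.

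Conceptually, this is nothing more than the observation that the polytope $\{\lambda : 0 \le \lambda_i \le 1/K,\ \sum_i\lambda_i \le 1\}$ has extreme points given by uniform distributions on subsets of size $\le K$, so the linear functional $\sum \lambda_i a_i$ attains its maximum by placing mass $1/K$ on the indices of the $K$ largest $a_i$'s. The only "obstacle"---entirely routine---is the book-keeping between the inequality $\|\lambda\|_1 \le 1$ and the $\ell_\infty$ bound, which the telescoping identity above handles cleanly in one step.
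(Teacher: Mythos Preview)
Your argument is correct, and in fact your closing paragraph \emph{is} essentially the paper's proof: the paper simply identifies the extreme points of the polytope $\Lambda = \{\lambda:\ \|\lambda\|_1\le 1,\ \|\lambda\|_\infty\le 1/K\}$ as vectors with exactly $K$ nonzero coordinates equal to $\pm 1/K$, and evaluates the linear functional there. Your main argument takes a different, more hands-on route via the telescoping comparison with the pivot value $a_K$, which is a clean Abel-summation/rearrangement computation and avoids any appeal to convex geometry.

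Two remarks. First, you are right to flag the implicit nonnegativity assumptions $\lambda_i\ge 0$ and $a_i\ge 0$: the lemma as literally stated is false without them (take $n=2$, $K=1$, $a=(-1,-2)$, $\lambda=0$, so that $\sum\lambda_i a_i=0>-1=a_1^*$), and your middle inequality $a_K(1-\sum_{i\le K}\lambda_i)\ge a_K\sum_{i>K}\lambda_i$ genuinely needs $a_K\ge 0$. The paper's extreme-point proof has the same hidden issue: with signs $\pm 1/K$ allowed, the supremum over $\ext(\Lambda)$ is $\frac{1}{K}\sum_{i=1}^K |a|^*_i$, which only coincides with $\frac{1}{K}\sum_{i=1}^K a^*_i$ when the $a_i$ are nonnegative. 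In the application (Step~5 of the decoupling proof) both $\lambda_i\ge 0$ and $p_{ki}\ge 0$ hold, so this is harmless. Second, your description of the extreme points of the \emph{nonnegative} polytope $\{0\le\lambda_i\le 1/K,\ \sum\lambda_i\le 1\}$ as uniform measures on subsets of size $\le K$ is accurate, whereas the paper's polytope (allowing signs) has extreme points supported on sets of size exactly $K$; this is a cosmetic difference once one restricts to $a_i\ge 0$, since the maximum is then attained at a size-$K$ support.
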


\begin{proof}
It is easy to check that each extreme point of the convex set 
$$
\Lambda := \{ \lambda \in \R^n :\; \|\lambda\|_1 \le 1, \; \|\lambda\|_\infty \le 1/K \}
$$
has exactly $K$ nonzero coefficients which are equal to $\pm 1/K$.
Evaluating the linear form $\sum \lambda_i a_i$ on these extreme points, we obtain
$$
\sup_{\lambda \in \Lambda} \sum_{i=1}^n \lambda_i a_i
= \sup_{\lambda \in \ext(\Lambda)} \sum_{i=1}^n \lambda_i a_i
= \frac{1}{K} \sum_{i=1}^K a^*_i.
$$
The proof is complete.
\end{proof}

\medskip

\begin{proof}[Proof of Decoupling Proposition~\ref{decoupling}.]
By replacing $X_i$ with $X_i/K_3$ we can assume 
without loss of generality that $K_1 = K_2 = K_3 = 1$. 
By perturbing the vectors $X_i$ slightly we may also assume that $X_i$ are all different.

\smallskip
{\bf Step 1: separation and the structure of coefficients.} 
Suppose the assumptions of the Proposition hold, and let us choose a vector $x \in S^{n-1}$ 
which attains the supremum in \eqref{decoupling assumption}. We denote
$$
a_i := \< X_i, x \>  \quad \text{for } i \in [m],
$$
and without loss of generality we may assume that $a_i \ne 0$.
We also denote
$$
\bar{n} := n + \Big( \frac{N}{m} \Big)^{1/r} m.
$$
We choose parameter $\a = \a(r,r',r'',\d) \in (0,1)$ sufficiently small; 
its choice will become clear later on in the argument.
At this point, we may assume that
$$
\|a\|_{2,\infty}^2 \le \bar{n}, \quad
\|a\|_2^2 \gtrsim_\a (\log \log m)^2 \bar{n}.
$$
We can use Structure Proposition~\ref{structure} to locate the structure 
in the coefficients $a_i$.
To this end, we apply this result for $b_i = a_i^2/\bar{n}$
and obtain a number $l \le \log m$ and a 
subset of indices $I_1 \subseteq [m]$.
We can also assume that $l$ is sufficiently large --
larger than an arbitrary quantity which depends on $\a$. 

Since a vector $x \in S^{n-1}$ satisfies 
$\< X_i/a_i, x \> = 1$ for all $i \in I_1$ (in fact for all $i \in [m]$), 
a separation argument for the convex hull $K := \conv(X_i/a_i)_{i \in I_1}$ 
yields the existence of a vector $\bar{x} \in \conv(K \cup 0)$ that satisfies
\begin{equation}  											\label{xi ai x}
  \|\bar{x}\|_2 = 1, \quad 
  \< X_i/a_i, \bar{x} \> \ge 1  \quad \text{for } i \in I_1.
\end{equation}
We express $\bar{x}$ as a convex combination
\begin{equation}											\label{convex combination}
  \bar{x} = \sum_{i \in I_1} \lambda_i X_i/a_i \quad
  \text{for some } \lambda_i \ge 0, \quad
  \sum_{i \in I_1} \lambda_i \le 1.
\end{equation}

We then read the conclusion of Structure Proposition~\ref{structure} as follows. 
There exists a futher subset of indices $I_2 \subseteq I_1$ such that the sizes 
$n_1 := |I_1|$ and $n_2 := |I_2|$ are regular in the sense that
\begin{equation}												\label{n1 n2}
  2^{l/2} \le \frac{m}{n_1} \le \frac{m}{n_2} \le \Big( \frac{m}{n_1} \Big)^{1+\a},
\end{equation}
and the coefficients on $I_1$ and $I_2$ are large:
\begin{align}														
  &a_i^2 \ge \frac{\bar{n}}{l n_1}  &\text{for } i \in I_1,				\label{ai on I1}\\
  &a_i^2 \ge \frac{\bar{n}}{l n_2} \text{ and } a_i^2 \ge 2 \lambda_i \bar{n} 
  	& \text{for } i \in I_2.										\label{ai on I2} 
\end{align}
Furthermore, we can make $l$ sufficiently large depending on $\a$, say
$l \ge 100/\a^2$.

\smallskip
{\bf Step 2: random selection.}
We will reduce the number of terms $n_1$ in the sum \eqref{convex combination} 
defining $\bar{x}$ using random selection, trying to bring this number down to about $n_2$.
As is usual in dealing with sums of independent 
random variables, we will need to ensure that all summands $\lambda_i X_i/a_i$ have controlled magnitudes. 
To this end, we have $\|X_i\|_2 \le \sqrt{n}$ by the assumption, and  
we can bound $1/a_i$ through \eqref{ai on I1}. Finally, we have an a priori bound 
$\lambda_i \le 1$ on the coefficients of the convex combination. However, the latter bound will 
turn out to be too weak, and we will need $\lambda_i \lesssim_\a 1/n_2$ instead. 
To make this happen, instead of the sets $I_1$ and $I_2$ we will be working on their large
subsets $I_1'$ and $I_2'$ defines as 
$$
I_1' := \big\{ i \in I_1 :\; \lambda_i \le \frac{C_\a}{n_2} \big\}, \quad
I_2' := \big\{ i \in I_2 :\; \lambda_i \le \frac{C_\a}{n_2} \big\}
$$
where $C_\a$ is a sufficiently large quantity whose value we will choose later.
By Markov's inequality, this incurs almost no loss of coefficients:
\begin{equation}																		\label{I minus I}
  |I_1 \setminus I_1'| \le \frac{n_2}{C_\a}, \quad
  |I_2 \setminus I_2'| \le \frac{n_2}{C_\a}.
\end{equation}
 
We will perform a random selection on $I_1$ using B.~Maurey's empirical method \cite{Pi}.
Guided by the representation \eqref{convex combination} of $\bar{x}$ as a convex combination, 
we will treat $\lambda_i$ as probabilities, thus introducing a random vector $V$ with distribution
$$
\P \{ V = X_i/a_i \} = \lambda_i, \quad i \in I_1'.
$$ 
On the remainder of the probability space, we assign $V$ zero value:
$\P\{V=0\} = 1 - \sum_{i \in I_1'} \lambda_i$.
Consider independent copies $V_1, V_2, \ldots$ of $V$.
We are not going to do a random selection on the set $I_1 \setminus I_1'$ where the coefficients
$\lambda_i$ may be out of control, 
so we just add its the contribution by defining independent random vectors
$$
Y_j := V_j + \sum_{i \in I_1 \setminus I_1'} \lambda_i X_i a_i, 
\quad j=1,2,\ldots
$$
Finally, for $C_\a' := C_\a/\a$, we consider the average of 
about $n_2$ such vectors: 
\begin{equation}													\label{y bar}
  \bar{y} := \frac{C_\a'}{n_2} \sum_{j=1}^{n_2/C_\a'} Y_j.
\end{equation}

We would like to think of $\bar{y}$ as a random version of the vector $\bar{x}$. 
This is certainly true in expectation:
$$
\E \bar{y} = \E Y_1 = \sum_{i \in I_1} \lambda_i X_i/a_i = \bar{x}.
$$
Also, like $\bar{x}$, the random vector $\bar{y}$ is a convex combination
of terms $X_i/a_i$ (now even with equal weights). 
The advantage of $\bar{y}$ over $\bar{x}$
is that it is a convex combination of much fewer terms, 
as $n_2/C_\a' \ll n_2 \le n_1$.
In the next two steps, we will check that $\bar{y}$ is similar to $\bar{x}$ in the sense
that its norm is also well bounded above, and at least $\sim n_2$ of the inner products 
$\< X_i/a_i, \bar{y} \> $ are still nicely bounded below. 

\smallskip
{\bf Step 3: control of the norm.}
By independence, we have 
\begin{align*}
\E \|\bar{y} - \bar{x}\|_2^2
  &= \E \Big\| \frac{C_\a'}{n_2} \sum_{j=1}^{n_2/C_\a'} (Y_j - \E Y_j) \Big\|_2^2 
   = \Big( \frac{C_\a'}{n_2} \Big)^2 \sum_{j=1}^{n_2/C_\a'} \E \|Y_j - \E Y_j\|_2^2 \\
  &\lesssim_\a \frac{1}{n_2} \E \|Y_1 - \E Y_1\|_2^2 
   = \frac{1}{n_2} \E \|V - \E V\|_2^2 
   \le \frac{4}{n_2} \E \|V\|_2^2 \\ 
  &= \frac{4}{n_2} \sum_{i \in I_1'} \lambda_i \|X_i\|_2^2/a_i^2
   \le \frac{4}{n_2} \max_{i \in I_1} \|X_i\|_2^2/a_i^2,
\end{align*}
where the last inequality follows because $I_1' \subseteq I_1$ 
and $\sum_{i \in I_1} \lambda_i \le 1$.

Since $\bar{n} \ge n$, \eqref{ai on I1} gives us the lower bound 
$$
a_i^2 \ge \frac{n}{l n_1}		\quad \text{for } i \in I_1.
$$
Together with the assumption $\|X_i\|_2^2 \le n$, this implies that 
$$
\E \|\bar{y} - \bar{x}\|_2^2 
  \lesssim_\a \frac{1}{n_2} \cdot \frac{n}{n/l n_1}
  \le \frac{l n_1}{n_2}.
$$
Since $\|\bar{x}\|_2^2 = 1 \le ln_1/n_2$, we conclude that with 
probability at least $0.9$, one has
\begin{equation}																\label{y norm}
  \|\bar{y}\|_2^2 \lesssim_\a \frac{l n_1}{n_2}.
\end{equation}

\smallskip
{\bf Step 4: removal of the diagonal term.}
We know from \eqref{xi ai x} that $\< X_i/a_i, \bar{x} \> \ge 1$ for many terms $X_i$.
We would like to replace $\bar{x}$ by its random version $\bar{y}$, establishing a lower bound
$\< X_k/a_k, \bar{y} \> \ge 1$ for many terms $X_k$. 
But at the same time, our main goal is decoupling, 
in which we would need to make the random vector $\bar{y}$ independent
of those terms $X_k$.
To make this possible, we will first remove from the sum \eqref{y bar} defining $\bar{y}$
the ``diagonal'' term containing $X_k$, and we call the resulting vector $\bar{y}^{(k)}$.

To make this precise, let us fix $k \in I_2' \subseteq I_1' \subseteq I_1$. 
We consider independent random vectors 
$$
V_j^{(k)} := V_j \one_{\{V_j \ne X_k/a_k\}}, 
\quad
Y_j^{(k)} := V_j^{(k)} + \sum_{i \in I_1 \setminus I_1'} \lambda_i X_i a_i, 
\quad j=1,2,\ldots
$$
Note that 
\begin{equation}										\label{discrepancy}
  \P \{ Y_j^{(k)} \ne Y_j \} = \P \{ V_j^{(k)} \ne V_j \} 
  = \P \{ V_j = X_k/a_k \} = \lambda_k.
\end{equation}
Similarly to the definition \eqref{y bar} of $\bar{y}$, we define
$$
\bar{y}^{(k)} := \frac{C_\a'}{n_2} \sum_{j=1}^{n_2/C_\a'} Y_j^{(k)}.
$$
Then 
\begin{equation}									\label{ykx}
  \E \bar{y}^{(k)} 
  = \E Y_1^{(k)}
  = \E Y_1 - \lambda_k X_k/a_k
  = \bar{x} - \lambda_k X_k/a_k.
\end{equation}

As we said before, we would like to show that the random variable
$$
Z_k := \< X_k/a_k, \bar{y}^{(k)} \>
$$
is bounded below by a constant with high probability. 
First, we will estimate its mean
$$
\E Z_k = \< X_k/a_k, \bar{x} \> - \lambda_k \|X_k\|_2^2/a_k^2.
$$
To estimate the terms in the right hand side, note that 
$\< X_i/a_i, \bar{x} \> \ge 1$ by \eqref{xi ai x}
and $\|X_k\|_2^2 \le n$ by the assumption.
Now is the crucial point when we use that $a_i^2$ dominate $\lambda_i$
as in the second inequality in \eqref{ai on I2}. This allows us to bound the 
``diagonal'' term as
$$
\lambda_k \|X_k\|_2^2/a_k^2 \le n/2\bar{n} \le n/2n = 1/2.
$$
As a result, we have
\begin{equation}										\label{exp}
  \E Z_k \ge 1 - 1/2 = 1/2.
\end{equation}

\smallskip
{\bf Step 5: control of the inner products.}
We would need a stronger statement than \eqref{exp} --
that $Z_k$ is bounded below not only in expectation
but also with high probability. We will get this immediately by 
Chebyshev's inequality if we can upper bound the variance of $Z_k$. 
In a way similar to Step~3, we estimate
\begin{align}											\label{var}
\Var Z_k 
  &= \E (Z_k - \E Z_k)^2
  = \E \Big\langle X_k/a_k, \frac{C_\a'}{n_2} 
    \sum_{j=1}^{n_2/C_\a'} (Y_j^{(k)} - \E Y_j^{(k)}) \Big\rangle ^2 \nonumber\\
  &\lesssim_\a \frac{1}{n_2} \E \< X_k/a_k, V_1^{(k)} \> ^2
  = \frac{1}{n_2} \sum_{i \in I_1',\; i \ne k} \lambda_i \< X_k/a_k, X_i/a_i \> ^2.
\end{align}
Now we need to estimate the various terms in the right hand side of \eqref{var}. 

We start with the estimate on the inner products, collecting them into
$$
S := \sum_{i \in I_1',\; i \ne k} \lambda_i \< X_k, X_i \> ^2
= \sum_{i \in I_1'} \lambda_i p_{ki}
\quad \text{where } p_{ki} = \< X_k, X_i \> ^2 \one_{\{k \ne i\}}.
$$
Recall that, by the construction of $\lambda_i$ and of $I_1' \subseteq I$, we
have $\sum_{i \in I_1'} \lambda_i \le 1$ and $\lambda_i \le C_\a/n_2$ for $i \in I_1'$.
We use Lemma~\ref{order statistics} on order statistics to obtain the bound
$$
S \le \frac{C_\a}{n_2} \sum_{i=1}^{n_2/C_\a} (p_k)^*_i 
  = \frac{C_\a}{n_2} \max_{\substack{E \subseteq [m] \\ |E| = n_2/C_\a}} 
      \sum_{i \in E,\; i \ne k} \< X_i, X_k \> ^2.
$$
Finally, we use our weak orthonormality assumption \eqref{weak orthonormality}
to conclude that
$$
S  \lesssim_\a \Big( \frac{N}{n_2} \Big)^{1/r'} n.
$$

To complete the bound on the variance of $Z_k$ in \eqref{var} it 
remains to obtain some good lower bounds on $a_k$ and $a_i$.
Since $k \in I_2' \subseteq I_2$, \eqref{ai on I2} yields
$$
a_k^2 \ge \frac{\bar{n}}{l n_2} \ge \frac{n}{l n_2}.
$$
Similarly we can bound the coefficients $a_i$ in \eqref{var}:
using \eqref{ai on I1} we have $a_i^2 \ge \bar{n}/{l n_1}$ since since $i \in I_1' \subseteq I$.
But here we will not simply replace $\bar{n}$ by $n$, as we shall try to use
$a_i^2$ to offset the term $(N/n_2)^{1/r'}$ in the estimate on $S$.
To this end, we note that $\bar{n} \ge (N/m)^{1/r} m \ge (N/n_1)^{1/r} n_1$
because $m \ge n_1$. Therefore, using the last inequality in \eqref{n1 n2} and that $N \ge m$, 
we have
\begin{equation}										\label{nbar n1}
  \frac{\bar{n}}{n_1} \ge \Big( \frac{N}{n_1} \Big)^{1/r} 
  \ge \Big( \frac{N}{n_2} \Big)^{\frac{1}{(1+\a)r}}.
\end{equation}
Using this, we obtain a good lower bound 
$$  
a_i^2 \ge \frac{\bar{n}}{l n_1}
\ge \frac{1}{l} \Big( \frac{N}{n_2} \Big)^{\frac{1}{(1+\a)r}},
\quad \text{for } i \in I_1'.
$$

Combining the estimates on $S$, $a_k$ and $a_i$, we conclude our lower bound \eqref{var}
on the variance of $Z_k$ as follows:
\begin{align*}
\Var Z_k
  &\lesssim_\a \frac{1}{n_2} 
    \cdot \frac{ln_2}{n} 
    \cdot l \Big( \frac{n_2}{N} \Big)^{\frac{1}{(1+\a)r}}
    \cdot \Big( \frac{N}{n_2} \Big)^{1/r'} n \\
  &\le l^2 \Big( \frac{n_2}{N} \Big)^\a		
  			\quad \text{(by choosing $\a$ small enough depending on $r$, $r'$)}\\
  &\le l^2 \Big( \frac{n_2}{m} \Big)^\a
  \le l^2 2^{-\a l / 2}					\quad \text{(by \eqref{n1 n2})}\\
  &\le \a/16.			\quad \text{(since $l$ is large enough depending on $\a$)}
\end{align*}
Combining this with the lower bound \eqref{exp} on the expectation, we conclude 
by Chebyshev's inequality the desired estimate
\begin{equation}										\label{Zk large}
  \P \big\{ Z_k = \< X_k/a_k, \bar{y}^{(k)} \> \ge \frac{1}{4} \big\}
  \ge 1-\a
  \quad \text{for } k \in I_2'.
\end{equation}

\smallskip
{\bf Step 6: decoupling.}
We are nearing the completion of the proof.
Let us consider the good events
$$
\EE_k := \big\{ \< X_k/a_k, \bar{y} \> \ge \frac{1}{4}
  \text{ and } \bar{y} = \bar{y}^{(k)} \big\}
\quad \text{for } k \in I_2'.
$$
To show that each $\EE_k$ occurs with high probability, 
we note that by definition of $\bar{y}$ and $\bar{y}^{(k)}$ one has
\begin{align*}
\P \{ \bar{y} \ne \bar{y}^{(k)} \}
  &\le \P \{ Y_j \ne Y_j^{(k)} \text{ for some } j \in [n_2/C_\a']\} \\
  &\le \sum_{j=1}^{n_2/C_\a'} \P \{ Y_j \ne Y_j^{(k)} \}
  = \frac{n_2}{C_\a'} \cdot \lambda_k				\quad \text{(by \eqref{discrepancy})} \\
  &\le \frac{n_2}{C_\a'} \cdot \frac{C_\a}{n_2}		\quad \text{(by definition of $I_2'$)} \\  
  &= \a.											\quad \text{(as we chose $C_\a' = C_\a/\a$)}
\end{align*}
From this and using \eqref{Zk large} we conclude that
$$
\P \{ \EE_k \} 
\ge 1 - \P \big\{ \< X_k/a_k, \bar{y}^{(k)} \> < \frac{1}{4} \big\} 
  - \P \{ \bar{y} \ne \bar{y}^{(k)} \}
\ge 1 - 2\a
\quad \text{for } k \in I_2'.
$$

An application of Fubini theorem yields that with probability at least $0.9$,
at least $(1-20\a)|I_2'|$ of the events $\EE_k$ hold simultaneously. 
More accurately, with probability at least $0.9$ the following event occurs, 
which we denote by $\EE$. 
There exists a subset $I \subseteq I_2'$ of size $|I| \ge (1-20\a)|I_2'|$ such that 
$\EE_k$ holds for all $k \in I$. Note that using \eqref{I minus I} and choosing $C_\a$
sufficiently large we have
\begin{equation}										\label{I size}
  (1-21\a)n_2 \le |I| \le n_2.
\end{equation}

Recall that the norm bound \eqref{y norm} also holds 
with high probability $0.9$. Hence with probability at least $0.8$, both 
$\EE$ and this norm bound holds.
Let us fix a realization of our random variables for which this happens.
Then, first of all, by definition of $\EE_k$ we have
\begin{equation}										\label{xk ak y}
  \< X_k/a_k, \bar{y} \> \ge \frac{1}{4}
  \quad \text{for } k \in I.
\end{equation}
Next, we are going to observe that $\bar{y}$ lies in the span of few vectors $X_i$.
Indeed, by construction $\bar{y}^{(k)}$ lies in the span of the vectors 
$Y_j^{(k)}$ for $j \in [n_2/C_\a']$.
Each such $Y_j^{(k)}$ by construction lies in the span of the vectors 
$X_i$, $i \in I_1 \setminus I_1'$
and of one vector $V_j^{(k)}$. Finally, each such vector $V_j^{(k)}$, again by construction, 
is either equal zero or $V_j$, which in turn equals $X_{i_0}$ for some $i_0 \ne k$.
Since $\EE$ holds, we have $\bar{y} = \bar{y}^{(k)}$ for all $k \in I$. This implies that there 
exists a subset $I_0 \subseteq [m]$ (consisting of the indices $i_0$ as above)
with the following properties. Firstly, $I_0$ does not contain any of indices $k \in I$; 
in other words $I_0$ is disjoint from $I$. Secondly, this set is small: $|I_0| \le n_2/C_\a'$.
Thirdly, $\bar{y}$ lies in the span of $X_i$, $i \in I_0 \cup (I_1 \setminus I_1')$. 
We claim that this set of indices, 
$$
J := I_0 \cup (I_1 \setminus I_1')
$$
satisfies the conclusion of the Proposition.

Since $I$ and $I_0$ are disjoint and $I \subseteq I_2' \subseteq I_1'$, 
it follows that $I$ and $J$ are disjoint as required.
Moreover, by \eqref{I minus I} and by choosing $C_\a$, $C_\a'$ sufficiently large we have
$$
|J| \le |I_0| + |I_1 \setminus I_1'|
\le \frac{n_2}{C_\a'} + \frac{n_2}{C_\a}
\le \a n_2.
$$
When we combine this with \eqref{I size} and choose $\a$ sufficiently small depending on $\d$,
we achieve
$$
|J| \le \d |I|
$$
as required.
Finally, we claim that the normalized vector 
$$
y := \frac{\bar{y}}{\|\bar{y}\|_2}
$$
satisfies the conclusion of the Proposition.
Indeed, we already noted that $\bar{y} \in \Span(X_j)_{j \in J}$, as required. 
Next, for each $k \in I \subseteq I_2' \subseteq I_2$ we have
\begin{align*}
\< X_k, y \> ^2 
  &\ge \frac{a_k^2}{16\|\bar{y}\|_2^2}					\quad \text{(by \eqref{xk ak y})} \\
  &\gtrsim_\a \frac{\bar{n}}{l n_2} \cdot \frac{n_2}{l n_1}	
    				\quad \text{(by \eqref{ai on I2} and \eqref{y norm})} \\
  &= \frac{\bar{n}}{l^2 n_1}
  \ge \frac{1}{l^2} \Big( \frac{N}{n_2} \Big)^{\frac{1}{(1+\a)r}}.
  				\quad \text{(by \eqref{nbar n1})}
\end{align*}
We can get rid of $l^2$ in this estimate using the bound
\begin{align*}
\Big( \frac{N}{n_2} \Big)^{\frac{\a}{(1+\a)r}}
  &\ge \Big( \frac{m}{n_2} \Big)^{\frac{\a}{(1+\a)r}} 
  \ge 2^{\frac{\a l}{2(1+\a)r}}				\quad \text{(by \eqref{n1 n2})} \\
  &\ge 2^{\frac{\a l}{3 r}} 
  \ge 2^{\a^2 l}					\quad \text{(choosing $\a$ small enough depending on $r$)}\\
  &\ge l^2. 						\quad \text{(since $l$ is large enough depending on $\a$)}
\end{align*}
Therefore
$$
\< X_k, y \> ^2 
\ge \Big( \frac{N}{n_2} \Big)^{\frac{1-\a}{(1+\a)r}}
\ge \Big( \frac{N}{n_2} \Big)^{1/r''} 
\quad \text{for } k \in I
$$
where the last inequality follows by choosing $\a$ sufficiently small depending on $r$, $r''$.
This completes the proof of Decoupling Proposition~\ref{decoupling}.
\end{proof}

\section{Norms of random matrices with independent columns}		\label{s: random matrices}
%-------------------------------------------------------------

In this section we apply our decoupling principle, Proposition~\ref{decoupling},
to estimate norms of random matrices with independent columns.
As we said, a simple truncation argument of J.~Bourgain \cite{Bo} reduces the
approximation problem for covariance matrices to bounding the norm of the 
random matrix $\sum_{i \in E} X_i \otimes X_i$
uniformly over index sets $E$. The following result gives such an estimate
for random vectors $X_i$ with finite moment assumptions. 

\begin{theorem}							\label{norm}
  Let $1 \le n \le N$ be integers, and let $4 < p < q$ and $t \ge 1$.
  Consider independent random vectors $X_1,\ldots,X_N$ in $\R^n$ ($1 \le n \le N$) which satisfy 
  the moment assumptions \eqref{moment assumptions}.
  Then with probability at least $1 - C t^{-0.9q}$,
  for every index set $E \subseteq [N]$, $|E| \ge 4$, one has
  $$
  \Big\| \sum_{i \in E} X_i \otimes X_i \Big\|
    \lesssim_{p,q,K,L} t^2 (\log \log |E|)^2 \Big[ n + \Big( \frac{N}{|E|} \Big)^{4/p} |E| \Big].
  $$
\end{theorem}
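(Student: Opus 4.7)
The plan is to assume the claimed norm bound fails on some $E$, apply the Decoupling Proposition~\ref{decoupling}, and rule out its conclusion by combining the $q$-th moment hypothesis with a union bound. The starting point is to combine the two high-probability inputs already in hand: apply Theorem~\ref{weak ell2 bound} and Lemma~\ref{almost orthogonality} simultaneously (each with parameter $t$) to obtain an event $\Omega_0$ of probability at least $1 - Ct^{-0.9q}$ on which, uniformly in $E$, both the weak pairwise orthogonality \eqref{weak orthonormality} with $r' = q/4$ and the weak $\ell_2$ bound $\sup_x \|(\< X_i, x\>)_{i \in E}\|_{2,\infty}^2 \le K_3^2[n + (N/|E|)^{1/r}|E|]$ with $r = p/4$ and $K_3 \asymp_{q,K,L} t$ hold. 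The passage from the exponent $4/q$ provided by Theorem~\ref{weak ell2 bound} to $4/p$ costs only a constant, since $p < q$ and $(N/|E|)^{4/q}\le(N/|E|)^{4/p}$.

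On $\Omega_0$, I would pick $r'' \in (p/4,\,q/4)$ (so $r = p/4 < r''$ and $r < r'$) and a small $\delta \in (0,1)$ (to be chosen depending on $p,q$). Suppose, toward contradiction, that for some $E \subseteq [N]$ with $|E|\ge 4$ the norm $\|\sum_{i \in E} X_i \otimes X_i\|$ exceeds a sufficiently large (Decoupling-compatible) multiple of $K_3^2(\log\log|E|)^2[n + (N/|E|)^{1/r}|E|]$. Decoupling Proposition~\ref{decoupling} then produces nonempty disjoint $I,J \subseteq E$ with $|J| \le \delta|I|$ and a unit vector $y \in \Span(X_j)_{j \in J}$ such that $\< X_i, y\>^2 \ge K_3^2(N/|I|)^{1/r''}$ for every $i \in I$. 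The decisive feature is that $y$ lies in $\Span(X_j)_{j \in J}$, so conditioning on $(X_j)_{j \in J}$ fixes the subspace $V = \Span(X_j)_{j \in J}$ (of dimension at most $|J|$) while leaving $(X_i)_{i \in I}$ independent of $V$.

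The remaining probabilistic bound is a union bound over admissible triples $(I,J,y)$. For fixed disjoint $I, J \subseteq [N]$ with $|I|=k$, $|J|=j\le \delta k$, I would discretize $y$ by an $\e$-net $\NN_\e$ of $S^{n-1} \cap V$ of size at most $(3/\e)^j$, choosing $\e$ so that perturbing $y$ within $\e$ preserves the lower bound $\< X_i, y\>^2 \gtrsim K_3^2(N/k)^{1/r''}$ for some $y'\in\NN_\e$. For each such $y'$, the moment hypothesis \eqref{moment assumptions} and Markov's inequality give $\P\{\<X_i,y'\>^2 \ge cK_3^2(N/k)^{1/r''}\} \lesssim_{q,L} K_3^{-q}(k/N)^{q/(2r'')}$ for each $i \in I$, and conditional independence of $(X_i)_{i\in I}$ given $(X_j)_{j\in J}$ tensorizes this over the $k$ indices. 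Summing over $(I,J)$ via $\binom{N}{k}\binom{N-k}{j}\le (eN/k)^k(eN/j)^j$, over the net, and over $(k,j)$ then gives the total failure probability.

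The main obstacle is in the arithmetic of this union bound. The choice $q/(2r'')>1$ makes $\binom{N}{k}(k/N)^{qk/(2r'')}$ decay like $(k/N)^{\g k}$ with $\g = q/(2r'')-1>0$, and the freedom to take $K_3 = C_0 t$ with $C_0=C_0(q,L)$ large absorbs constants in the $k$-factor. The delicate point is controlling the net factor $(C\sqrt{n}/K_3)^j (k/N)^{j/(2r'')}\binom{N-k}{j}$: the $\sqrt{n}$ comes from the worst-case perturbation estimate $|\<X_i,y-y'\>|\le K\sqrt{n}\e$, and when $n$ is comparable to $N$ this produces a $(\sqrt{N})^j$ growth that must be absorbed by the $(k/N)^{\g k}$ decay. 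To handle this one chooses $r''$ close to $p/4$ so that $\g$ is as large as $2q/p-1$, takes $\d$ small depending on $p,q$, and refines the net using the auxiliary moment bound $\E\|P_V X_i\|_2^2 \le |J|L^2$ to replace $\sqrt{n}$ by $\sqrt{|J|}\,L$ on a high-probability subevent. With these choices the geometric sum in $k$ yields a total probability of order $t^{-0.9q}$, which contradicts the hypothesis on $\Omega_0$ and completes the proof.
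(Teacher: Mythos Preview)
Your overall strategy---invoke Theorem~\ref{weak ell2 bound} and Lemma~\ref{almost orthogonality} to place yourself on a good event, apply the Decoupling Proposition~\ref{decoupling} to the offending set $E$, then discretize $y$ by a net in $\Span(X_j)_{j\in J}$ and run a union bound over $(I,J,y_0)$ using Markov on the $q$-th moment---is exactly the paper's route. The parameter choices and the final probability bookkeeping also line up.

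The genuine gap is in your perturbation step, i.e.\ how you pass from $y$ to a net point $y_0$ while keeping $\langle X_i,y_0\rangle^2$ large for the indices $i\in I$. Your first attempt via $|\langle X_i,y-y_0\rangle|\le K\sqrt{n}\,\e$ forces the net mesh to scale with $\sqrt{n}$, which you correctly flag as fatal when $n\asymp N$. Your proposed fix, to replace $\sqrt{n}$ by $\sqrt{|J|}\,L$ via the bound $\E\|P_V X_i\|_2^2\le |J|L^2$, does not work as stated: this is only a second-moment control, so for each $i$ it gives $\P(\|P_V X_i\|_2>M\sqrt{|J|}L)\le 1/M^2$, which is nowhere near the exponential-in-$|I|$ smallness needed to survive the union bound over subsets $I$. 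Upgrading to a $q$-th moment of $\|P_V X_i\|_2$ still only yields polynomial tails; it does not tensorize into the required $(\cdot)^{|I|}$ decay.

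The paper sidesteps this entirely by \emph{reusing the weak $\ell_2$ bound} from Theorem~\ref{weak ell2 bound}, which already holds deterministically on the good event. Since $\|(\langle X_i,y-y_0\rangle)_{i\in I}\|_{2,\infty}^2\le \e^2\cdot C_q t^2\,[n+(N/|I|)^{4/q}|I|]$, at most $\delta|I|$ indices can have $\langle X_i,y-y_0\rangle^2$ exceeding $\e^2 C_q t^2[n+(N/|I|)^{4/q}|I|]/(\delta|I|)$. Choosing $\e=(\delta|I|/N)^5$ makes this threshold $\le t^2$, so on a subset $I_0\subseteq I$ with $|I_0|\ge(1-\delta)|I|$ the net approximation preserves the lower bound on $\langle X_i,y_0\rangle^2$. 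The net size is then $(2N/\delta|I|)^{5\delta|I|}$, with no $n$-dependence, and the union bound closes with $r''=r'=q/4$ and $\delta$ small depending only on $p,q$. The trick you are missing is thus twofold: control the perturbation via the weak $\ell_2$ norm rather than any pointwise bound on $X_i$, and allow yourself to discard a $\delta$-fraction of $I$ after passing to the net.
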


We can state Theorem~\ref{norm} in terms of random matrices with independent columns.

\begin{corollary}							\label{independent columns}
  Let $1 \le n \le N$ be integers, and let $4 < p < q$ and $t \ge 1$.
  Consider the $n \times N$ random matrix $A$ whose columns are
  independent random vectors $X_1,\ldots,X_N$ in $\R^n$ which satisfy \eqref{moment assumptions}.
  Then with probability at least $1 - C t^{-0.9q}$ one has 
  $$
  \|A\| \lesssim_{p,q,K,L} t \log \log N \cdot (\sqrt{n} + \sqrt{N}).
  $$
  Moreover, with the same probability all $n \times m$ submatrices $B$ of $A$ simultaneously 
  satisfy the following for all $4 \le m \le N$:
  $$
  \|B\| \lesssim_{p,q,K,L} t \log \log m \cdot  \Big[ \sqrt{n} + \Big( \frac{N}{m} \Big)^{2/p} \sqrt{m} \Big].
  $$
\end{corollary}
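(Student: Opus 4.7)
The plan is to reduce the operator norm bound to the norm bound on $\sum_{i \in E} X_i \otimes X_i$ already provided by Theorem~\ref{norm}. The key identity is that for any $n \times m$ matrix $B$ with columns $Y_1,\ldots,Y_m \in \R^n$, one has
$$
\|B\|^2 = \|B B^T\| = \Big\| \sum_{i=1}^m Y_i \otimes Y_i \Big\|.
$$
This is just the statement that the largest singular value squared of $B$ equals the largest eigenvalue of $BB^T$, and $BB^T = \sum_i Y_i \otimes Y_i$.

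First I would handle the $n \times N$ matrix $A$ itself by applying Theorem~\ref{norm} with $E = [N]$. This gives, with probability at least $1 - Ct^{-0.9q}$,
$$
\|A\|^2 = \Big\| \sum_{i=1}^N X_i \otimes X_i \Big\| \lesssim_{p,q,K,L} t^2 (\log \log N)^2 \big[ n + N \big],
$$
since the bracketed term reduces to $n + (N/N)^{4/p} N = n + N$. Taking square roots and using $\sqrt{n+N} \le \sqrt{n} + \sqrt{N}$ yields the first conclusion.

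For the uniform statement over submatrices, note that any $n \times m$ submatrix $B$ of $A$ (obtained by selecting columns indexed by some $E \subseteq [N]$ with $|E| = m$) satisfies
$$
\|B\|^2 = \Big\| \sum_{i \in E} X_i \otimes X_i \Big\|.
$$
The crucial point is that the high-probability event in Theorem~\ref{norm} is already a uniform statement over all $E \subseteq [N]$ with $|E| \ge 4$. So on this single event of probability $1 - C t^{-0.9q}$, we simultaneously obtain
$$
\|B\|^2 \lesssim_{p,q,K,L} t^2 (\log \log m)^2 \Big[ n + \Big( \frac{N}{m} \Big)^{4/p} m \Big]
$$
for every such $B$. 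Taking square roots and using $\sqrt{a+b} \le \sqrt{a} + \sqrt{b}$ gives the stated bound. Since the corollary is a direct packaging of Theorem~\ref{norm}, there is no real obstacle here; the only thing to note is that the two claimed events (for $A$ itself and for all submatrices) are the same event, so no additional union bound or probability loss is incurred.
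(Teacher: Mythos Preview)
Your proposal is correct and is exactly the intended derivation: the paper states this corollary without proof, as an immediate repackaging of Theorem~\ref{norm} via the identity $\|B\|^2 = \|BB^T\| = \big\|\sum_{i\in E} X_i\otimes X_i\big\|$ and the uniformity of Theorem~\ref{norm} over all $E\subseteq[N]$. The only minor point worth noting is that applying Theorem~\ref{norm} with $E=[N]$ requires $N\ge 4$; this is harmless since $\log\log N$ is not meaningful otherwise and, as the paper remarks after Theorem~\ref{main}, the small cases hold trivially without the iterated logarithm.
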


\begin{proof}[Proof of Theorem~\ref{norm}.]
By replacing $X_i$ with $X_i / \max(K,L)$ we can assume without loss of generality that $K=L=1$.
As we said, the argument will be based on Decoupling Proposition~\ref{decoupling}.
Its assumptions follow from known results. 
Indeed, the pairwise almost orthogonality of the vectors $X_i$ follows 
from Lemma~\ref{almost orthogonality}, which yields \eqref{cross terms OK}
with probability at least $1 - C t^{-q}$.
Also, the required bound on the weak $\ell_2$ norm follows from Theorem~\ref{weak ell2 bound},
which gives with probability at least $1 - C t^{-0.9 q}$ that
\begin{equation}							\label{2 infty OK}
  \sup_{x \in S^{n-1}} \|( \< X_i, x\> )_{i \in I}\|_{2,\infty}^2 
    \lesssim_{q} t^2 \Big[ n + \Big( \frac{N}{|I|} \Big)^{4/q} |I| \Big]
    \quad \text{for } I \subseteq [N].
\end{equation}
Consider the event $\EE$ that both required bounds \eqref{cross terms OK} and \eqref{2 infty OK} hold.

Let $\EE_0$ denote the event in the conclusion of the Theorem. It remains to prove that 
$\P(\EE_0^c \text{ and } \EE)$ is small. To this end, assume that $\EE$ holds but $\EE_0$ does not. 
Then there exists an index set $E \subset [N]$ whose size we denote by $m := |E|$, and which satisfies
$$
\Big\| \sum_{i \in E} X_i \otimes X_i \Big\|
  = \sup_{x \in S^{n-1}} \sum_{i \in E} \< X_i, x\> ^2
  \gtrsim_{p,q} t^2 (\log \log m)^2 \Big[ n + \Big( \frac{N}{m} \Big)^{4/p} m \Big].
$$
Recalling \eqref{cross terms OK} and \eqref{2 infty OK} we see that the assumptions of 
Decoupling Proposition~\ref{decoupling} hold for $1/r = 4/p$, $1/r' = 4/q$, $r'' = r'$, 
$K_1 = K = 1$, $K_2 = C_q \sqrt{t}$ for suitably large $C_q$, 
$K_3 = \max(K_1, K_2, 100 t^2)$, 
and for $\d =  \d(p,q) > 0$ sufficiently small (to be chosen later).
Applying Decoupling Proposition~\ref{decoupling} we obtain 
disjoint index sets $I, J \subseteq E \subseteq [N]$ with sizes
$$
|I| =: s, \quad  |J| \le \d s,
$$ 
and a vector $y \in S^{n-1} \cap \Span(X_j)_{j \in J}$ such that 
\begin{equation}							\label{Xi on y large}
\< X_i, y\> ^2 \ge 100 t^2 \Big( \frac{N}{s} \Big)^{1/r''}
\quad \text{for } i \in I.
\end{equation}

We will need to discretize the set of possible vectors $y$. Let 
$$
\e := \Big( \frac{\d s}{N} \Big)^5
$$
and consider an $\e$-net $\NN_J$ of the sphere $S^{n-1} \cap \Span(X_j)_{j \in J}$.
As in known by a volumetric argument (see e.g. \cite{MS} Lemma~2.6), one can choose such a net
with cardinality
$$
|\NN_J| \le (3/\e)^{|J|} \le \Big( \frac{2N}{\d s} \Big)^{5 \d s}.
$$
We can assume that the random set $\NN_J$ depends only on the number $\e$, the set $J$ and 
the random variables $(X_j)_{j \in J}$. 
Given a vector $y$ as we have found above, we can approximate it with some vector $y_0 \in \NN_J$ 
so that $\|y-y_0\|_2 \le \e$. By \eqref{2 infty OK} we have
$$
\|( \< X_i, y-y_0\> )_{i \in I}\|_{2,\infty}^2 
  \lesssim_{q} \e^2 t^2 \Big[ n + \Big( \frac{N}{s} \Big)^{1/r'} s \Big].
$$
This implies that all but at most $\d s$ indices $i$ in $I$ satisfy the inequality
\begin{equation}							\label{fraction of indices}
\< X_i, y-y_0\> ^2
  \lesssim_{q} \frac{\e^2 t^2}{\d s} \Big[ n + \Big( \frac{N}{s} \Big)^{1/r'} s \Big].
\end{equation}
Let us denote the set of these indices by $I_0 \subseteq I$.
The bound in \eqref{fraction of indices} can be simplified as
$$
\frac{\e^2}{\d s} \Big[ n + \Big( \frac{N}{s} \Big)^{1/r'} s \Big] 
  \le 2\d.
$$
Indeed, this estimate follows from the two bounds
\begin{gather*}
\frac{\e^2}{\d s} \cdot n 
  \le \Big( \frac{\d s}{N} \Big)^{10} \cdot \frac{n}{\d s} 
  \le \d		\quad \text{(because $n \le N$)}; \\
\frac{\e^2}{\d s} \cdot \Big( \frac{N}{s} \Big)^{1/r'} s 
  \le \frac{1}{\d} \Big( \frac{\d s}{N} \Big)^{10} \Big( \frac{N}{s} \Big)^{1/r'}
  \le \d.		\quad \text{(because $\d \le 1$, $r' \ge 1$)}
\end{gather*}
In particular, by choosing $\d = \d(q) > 0$ sufficiently small, \eqref{fraction of indices} implies  
$$
| \< X_i, y-y_0\> | \le t 	\quad \text{for } i \in I_0.
$$
Together with \eqref{Xi on y large} this yields by triangle inequality that 
$$
| \< X_i, y_0\> |  
  \ge 10 t \Big( \frac{N}{s} \Big)^{1/2r''} - t 
  \ge 9 t \Big( \frac{N}{s} \Big)^{1/2r''}		\quad \text{for } i \in I_0.
$$

Summarizing, we have shown that the event $\{ \EE_0^c \text{ and } \EE \}$ implies 
the following event: there exists a number $s \le N$, disjoint index subsets $I_0, J \subseteq [N]$
with sizes $|I_0| \ge (1-\d)s$, $|J| \le \d s$, and a vector $y_0 \in N_J$ such that 
$$
| \< X_i, y_0\> |  \ge 9 t \Big( \frac{N}{s} \Big)^{1/2r''}		\quad \text{for } i \in I_0.
$$
It will now be easy to estimate the probability of this event. 
First of all, for each fixed vector $y_0 \in S^{n-1}$ and each index $i$, 
the moment assumptions \eqref{moment assumptions} 
imply via Markov's inequality that 
$$
\P \big\{ | \< X_i, y_0\> |  \ge 9 t \Big( \frac{N}{s} \Big)^{1/2r''} \big\}
  \le \frac{1}{(9t)^q} \Big( \frac{N}{s} \Big)^{-q/2r''}
  \le \frac{1}{9 t^q} \Big( \frac{N}{s} \Big)^{-2}
$$
where the last line follows from our choice of $q$ and $r''$.
By independence, for each fixed vector $y_0 \in S^{n-1}$ and a fixed index set $I_0 \subseteq[N]$
of size $|I_0| \ge (1-\d)s$ we have
\begin{align}		
\P \big\{ | \< X_i, y_0\> |  \ge 9 \Big( \frac{N}{s} \Big)^{1/2r''} \text{ for } i \in I_0 \big\}
  &\le \Big[ \frac{1}{9 t^q} \Big( \frac{N}{s} \Big)^{-2} \Big]^{|I_0|} 	\nonumber\\
  \le 9^{-(1-\d)s} t^{-(1-\d)q} \Big( \frac{N}{s} \Big)^{-2(1-\d) s}.					\label{prob for fixed} 
\end{align}
Then we bound the probability of event $\{ \EE_0^c \text{ and } \EE \}$ by taking the union bound
over all $s$, $I_0$, $J$ as above, conditioning on the random variables $(X_j)_{j \in J}$ 
(which fixes the $\e$-net $\NN_J$), taking the union bound over the choice of $y_0 \in \NN_J$, 
and finally evaluating the probability for using \eqref{prob for fixed}. This way we obtain via
Stirling's approximation of the binomial coefficients that 
\begin{align*}
\P \{ \EE_0^c \text{ and } \EE \}
  &\le \sum_{s=1}^N \binom{N}{|I_0|} \binom{N}{|J|} |\NN_J| 
    \; 9^{-(1-\d)s} t^{-(1-\d)q} \Big( \frac{N}{s} \Big)^{-2(1-\d) s} \\
  &\le t^{-(1-\d)q} \sum_{s=1}^N 
    \Big( \frac{eN}{(1-\d)s} \Big)^{(1-\d)s} 
    \Big( \frac{eN}{\d s} \Big)^{\d s}
    \Big( \frac{2N}{\d s} \Big)^{5 \d s}
    \; 9^{-(1-\d)s} \Big( \frac{N}{s} \Big)^{-2(1-\d) s} \\
  &\le t^{-0.9 q} \sum_{s=1}^N \Big( \frac{N}{2 s} \Big)^s		\quad \text{(by choosing $\d>0$ small enough)}\\
  &\le t^{-0.9 q}.
\end{align*}
It follows that 
$$
\P \{ \EE_0^c \}
\le \P \{ \EE_0^c \text{ and } \EE \} + \P \{ \EE^c \}
\le t^{-0.9 q} + C t^{-q} + C t^{-0.9q}
\lesssim t^{-0.9 q}.
$$
This completes the proof of Theorem~\ref{norm}.
\end{proof}

\section{Approximating covariance matrices}					\label{s: approximation}
%------------------------------------------------

In this final section, we deduce our main result on the approximation of covariance
matrices for random vectors with finite moments.

\begin{theorem}													\label{main full}
  Consider independent random vectors $X_1,\ldots,X_N$ in $\R^n$, $4 \le n \le N$, which satisfy 
  moment assumptions \eqref{moment assumptions} for some $q > 4$ and some $K$, $L$.
  Then for every $\d > 0$ with probability at least $1-\d$ one has
  \begin{equation}																	\label{eq deviation}
  \Big\| \frac{1}{N} \sum_{i=1}^N X_i \otimes X_i - \E X_i \otimes X_i \Big\| 
    \lesssim_{q,K,L,\d} (\log \log n)^2 \Big( \frac{n}{N} \Big)^{\frac{1}{2}-\frac{2}{q}}.
  \end{equation}  
\end{theorem}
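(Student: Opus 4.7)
The plan is to deduce Theorem~\ref{main full} from the uniform norm bound of Theorem~\ref{norm} by a J.~Bourgain-style truncation argument \cite{Bo}. By rescaling we may assume $K=L=1$, so the quantity to control is $\sup_{x \in S^{n-1}} |N^{-1}\sum_i \<X_i,x\>^2 - \E \<X_i,x\>^2|$ as in \eqref{norm is sup}. A direct $\e$-net plus Bernstein proof as in Proposition~\ref{subgaussian prop} fails because $\<X_i,x\>$ has only $q$ finite moments, yielding only polynomial single-point deviations.

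For a truncation level $T > 0$ to be chosen, I would split at $|\<X_i,x\>| = T$ and decompose
\[
\frac{1}{N}\sum_i \<X_i,x\>^2 - \E \<X_i,x\>^2 = (\mathrm{I})(x) + (\mathrm{II})(x) - (\mathrm{III})(x),
\]
with $(\mathrm{I})(x) := N^{-1}\sum_i \<X_i,x\>^2 \one_{|\<X_i,x\>|\le T} - \E[\<X_i,x\>^2 \one_{|\<X_i,x\>|\le T}]$, $(\mathrm{II})(x) := N^{-1}\sum_{i \in E_T(x)} \<X_i,x\>^2$ where $E_T(x) := \{i : |\<X_i,x\>|>T\}$, and $(\mathrm{III})(x) := \E[\<X_i,x\>^2 \one_{|\<X_i,x\>|>T}]$. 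Markov's inequality handles (III): $(\mathrm{III})(x) \le T^{2-q}$. For (I), the summands are bounded by $T^2$ and have variance $\le \E \<X_i,x\>^4 \le 1$ (by Jensen, since $q>4$), so Bernstein together with a $6^n$-cardinality $1/2$-net of $S^{n-1}$ (as in Lemma~\ref{nets}) yields $\sup_x|(\mathrm{I})(x)| \lesssim_\d \sqrt{n/N} + T^2 n/N$ with probability at least $1-\d/3$.

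The core is term (II): bound
\[
(\mathrm{II})(x) \;\le\; \tfrac{1}{N}\big\|\textstyle\sum_{i \in E_T(x)} X_i \otimes X_i\big\|
\]
and apply Theorem~\ref{norm} uniformly over subsets of $[N]$. Since for $p>4$ the right-hand side of Theorem~\ref{norm} is monotone in $|E|$, one needs a uniform upper bound $|E_T(x)| \le m_*$; I would extract this from Theorem~\ref{weak ell2 bound}, using that by the definition of the weak $\ell_2$ norm $|E_T(x)| \le \|(\<X_i,x\>)\|_{2,\infty}^2 / T^2$, which delivers $m_* \sim N/T^{q/2}$ after an appropriate choice of the ambient index set.

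The proof finishes by choosing $T = (N/n)^{2/q}$ and $p$ in the range $\max(4, 8q/(q+4)) < p < q$ (which is nonempty precisely when $q>4$): then each of the four error terms $\sqrt{n/N}$, $T^2 n/N$, $T^{2-q}$, and the $(\mathrm{II})$-bound $(\log\log N)^2 (m_*/N)^{1-4/p}$ is at most of order $(\log\log n)^2 (n/N)^{1/2-2/q}$, and three applications of union bound finish. The main obstacle I expect is the uniform-in-$x$ control of $|E_T(x)|$ needed to feed a deterministic $m_*$ into Theorem~\ref{norm}, together with the somewhat delicate balancing of the four error terms against the target exponent $1/2-2/q$; once $m_*$ is in hand, assembling the bounds is routine.
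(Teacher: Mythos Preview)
Your proposal is correct and follows essentially the same route as the paper: truncate at level $B=(N/n)^{2/q}$, bound $|E_B|\lesssim n$ via Theorem~\ref{weak ell2 bound} (this is the paper's Lemma~\ref{large coefficients}, and your $m_*\sim N/T^{q/2}=n$ matches), control the large-coefficient sum via Theorem~\ref{norm} (the paper takes $p=(q+4)/2$, which lies in your admissible range $(8q/(q+4),q)$), and handle the expectation tail by H\"older--Markov exactly as you do.

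One technical point deserves care: because the truncation indicator in $(\mathrm{I})(x)$ depends on $x$, the map $x\mapsto(\mathrm{I})(x)$ is not of the form $\langle Ax,x\rangle$ and Lemma~\ref{nets} does not apply to it directly. The clean fix is to apply Lemma~\ref{nets} to the full quadratic form $\langle(\Sigma_N-\Sigma)x,x\rangle$ first and only then split into $(\mathrm{I})+(\mathrm{II})-(\mathrm{III})$ on the net; Bernstein plus the union bound then controls $\sup_{y\in\NN}|(\mathrm{I})(y)|$ as you intend, while $(\mathrm{II})$ and $(\mathrm{III})$ on the net are trivially dominated by their suprema over the sphere. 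The paper sidesteps this altogether by quoting a symmetrization-based truncation lemma from \cite{ALPT}, which delivers the single term $B\sqrt{n/N}=(n/N)^{1/2-2/q}$ in place of your two terms $\sqrt{n/N}+T^2n/N$; both bounds are acceptable since $T^2n/N=(n/N)^{1-4/q}\le(n/N)^{1/2-2/q}$ for $q\ge4$.
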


In our proof of Theorem~\ref{main full}, we can clearly assume that $K = L = 1$ in 
the moment assumptions \eqref{moment assumptions} by rescaling the vectors $X_i$.
So in the rest of this section we suppose $X_i$ are such random vectors. 

For a level $B>0$ and a vector $x \in S^{n-1}$, we consider the (random) 
index set of large coefficients
$$
E_B = E_B(x) := \{i \in [N] :\; |\< X_i, x\> | \ge B\}.
$$

\begin{lemma}[Large coefficients]				\label{large coefficients}
  Let $t \ge 1$. With probability at least $1 - C t^{0.9q}$, one has
  $$
  |E_B| \lesssim_q n/B^2 + N(t/B)^{q/2}		\quad \text{for } B > 0.
  $$
\end{lemma}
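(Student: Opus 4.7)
The plan is to deduce this large-coefficient bound directly from the weak $\ell_2$ estimate of Theorem~\ref{weak ell2 bound}, which already provides uniform control over all subsets $E \subseteq [N]$ and all $x \in S^{n-1}$. The key observation is that the weak $\ell_2$ norm of a vector with many entries of magnitude at least $B$ must be correspondingly large; combined with the subset-uniform bound, this gives an implicit inequality for $|E_B|$ that is straightforward to solve.

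More precisely, I would first note that if $a \in \R^N$ has $s$ coordinates of magnitude at least $B$, then its non-increasing rearrangement satisfies $a_i^* \ge B$ for $i \le s$, so $\|a\|_{2,\infty}^2 \ge B^2 s$. Applying this to the vector $a = (\langle X_i, x\rangle)_{i \in E_B(x)}$ and using Theorem~\ref{weak ell2 bound} with the (random) choice $E = E_B(x)$ (on the event of probability at least $1 - Ct^{-0.9q}$ provided by that theorem, which is uniform in both $E$ and $x$), we obtain
\[
B^2 s \;\le\; \|(\langle X_i, x\rangle)_{i \in E_B}\|_{2,\infty}^2 \;\lesssim_q\; n + t^2 \Big(\frac{N}{s}\Big)^{4/q} s \;=\; n + t^2 N^{4/q} s^{\,1-4/q},
\]
where $s := |E_B(x)|$ (the case $s=0$ being trivial).

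Next I would split into two cases to solve this implicit inequality. If the first term on the right dominates, say $B^2 s \le 2 C_q n$ where $C_q$ is the implicit constant above, then directly $s \lesssim_q n/B^2$. Otherwise $B^2 s > 2 C_q n$, which lets the $n$ term be absorbed into the left-hand side, yielding $B^2 s \lesssim_q t^2 N^{4/q} s^{\,1-4/q}$, i.e.\ $s^{4/q} \lesssim_q t^2 N^{4/q}/B^2$, and raising both sides to the $q/4$-th power gives $s \lesssim_q N (t/B)^{q/2}$. In either case we conclude $|E_B| \lesssim_q n/B^2 + N(t/B)^{q/2}$, uniformly in $x$ and $B$.

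There is no real obstacle here: the entire content of the lemma is packaged inside Theorem~\ref{weak ell2 bound}, whose uniformity over $E$ and $x$ is exactly what lets us take $E$ to be the random set $E_B(x)$ and still control every $B > 0$ and $x \in S^{n-1}$ on the same event. The only mild point to verify is that the probability bound $1 - Ct^{-0.9q}$ transfers verbatim from Theorem~\ref{weak ell2 bound}, which it does since we use that theorem once.
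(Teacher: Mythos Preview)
Your proposal is correct and follows essentially the same route as the paper: apply Theorem~\ref{weak ell2 bound} to the set $E = E_B(x)$, use that $s$ entries of magnitude at least $B$ force $\|a\|_{2,\infty}^2 \ge B^2 s$, and solve the resulting implicit inequality for $|E_B|$. The paper compresses the ``solving'' step into a single sentence, while you spell out the two-case split, but the argument is the same.
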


\begin{proof}
This estimate follows from Theorem~\ref{weak ell2 bound}.
By definition of the set $E_B$ and the weak $\ell_2$ norm, we obtain 
with the required probability that 
$$
B^2 |E_B| 
  \le \|(\< X_i, x\> )_{i \in E_B}\|_{2,\infty}^2
  \lesssim_q n + t^2 \Big( \frac{N}{|E_B|} \Big)^{4/q} |E_B|.
$$
Solving for $|E_B|$ we obtain the bound as in the conclusion.
\end{proof}

\medskip
\begin{proof}[Proof of Theorem~\ref{main full}.]
The truncation argument described in \cite{ALPT} in the beginning of proof of Proposition~4.3
reduces the problem to estimating the contribution to the sum of large coefficients.
Denote
$$
E =  \Big\| \frac{1}{N} \sum_{i=1}^N X_i \otimes X_i - \E X_i \otimes X_i \Big\|
= \sup_{x \in S^{n-1}} \Big| \frac{1}{N} \sum_{i=1}^N \< X_i, x\> ^2 - \E \< X_i, x\> ^2 \Big|.
$$
The truncation argument yields
that for every $B \ge 1$, one has with probability at least $1 - \d/3$ that
\begin{align}								\label{three terms}
E &\lesssim_{q,\d} B \sqrt{\frac{n}{N}} 
  + \sup_{x \in S^{n-1}} \frac{1}{N} \sum_{i \in E_B} \< X_i, x\> ^2
  + \sup_{x \in S^{n-1}} \frac{1}{N} \E \sum_{i \in E_B} \< X_i, x\> ^2 \nonumber\\
  &=: I_1 + I_2 + I_3.
\end{align}
We choose the value of the level 
$$
B = \Big( \frac{N}{n} \Big)^{2/q}
$$
so that, using Lemma~\ref{large coefficients}, with probability at least $1-\d/3$
we have 
\begin{equation}							\label{EB size}
|E_B| \lesssim_{q,\d} n.
\end{equation}
It remains to estimate the right hand side of \eqref{three terms} using \eqref{EB size}.

First, we clearly have
$$
I_1 = \Big( \frac{n}{N} \Big)^{\frac{1}{2} - \frac{2}{q}}.
$$
An estimate of $I_2$ follows from Theorem~\ref{norm} for some $p = p(q) \in (4,q)$ 
to be determined later. Note that enlarging $E_B$ can only make $I_2$ and $I_3$ larger. 
So without loss of generality we can assume that $|E_B| \ge 4$ as required in Theorem~\ref{norm}.
This way, we obtain with probability at least $1 - \d/3$ that 
\begin{align*}
I_2 
  &\lesssim_{q,\d} \frac{1}{N} (\log \log |E_B|)^2 
    \Big[ n + \Big( \frac{N}{|E_B|} \Big)^{4/p} |E_B| \Big] \\
  &\lesssim_{q,\d} (\log \log n)^2 \Big[ \frac{n}{N} + \Big( \frac{n}{N} \Big)^{1-\frac{4}{p}} \Big].
  		\qquad \text{(by \eqref{EB size})}
\end{align*}
Finally, to estimate $I_3$ let us fix $x$ and consider the random variable $Z_i = |\< X_i, x\> |$. 
Since $\E Z_i^q \le 1$, an application of H\"older's and Markov's inequalities yield 
$$
\E Z_i^2 \one_{\{Z_i \ge B\}} 
  \le (\E Z_i^q)^{2/q} ( \P(Z_i \ge B) )^{1-2/q} 
  \le B^{2-q}
  \lesssim_{q,\d} \Big( \frac{n}{N} \Big)^{2-\frac{4}{q}}.
$$
Therefore
$$
I_3
  = \sup_{x \in S^{n-1}} \frac{1}{N} \sum_{i=1}^N \E Z_i^2 \one_{\{Z_i \ge B\}}
  \lesssim_{q,\d} \Big( \frac{n}{N} \Big)^{2-\frac{4}{q}}.
$$

Since we are free to choose $p = p(q)$ in the interval $(4,q)$, we choose the middle of the interval, 
$p = (q+4)/2$. Returning to \eqref{three terms} we conclude that 
$$
E \lesssim_{q,\d} (\log \log n)^2 \Big( \frac{n}{N} \Big)^{\frac{1}{2} - \frac{2}{q}}.
$$
This completes the proof of Theorem~\ref{main full}.
\end{proof}

\end{document}